% [arxiv_v2: filecontents example.eps stripped, 183 chars]
\RequirePackage{fix-cm}
\documentclass[5pt]{article}
\usepackage{amsfonts}
\usepackage{mathrsfs}
\usepackage{amsmath}
\usepackage{amsthm}
\usepackage{enumerate}
\usepackage{multicol}
\usepackage{lipsum}
\makeatletter
\def\ps@pprintTitle{%
   \let\@oddhead\@empty
   \let\@evenhead\@empty
   \def\@oddfoot{\reset@font\hfil\thepage\hfil}
   \let\@evenfoot\@oddfoot
}
%\makeatothe

\usepackage{graphicx}
%\allowdisplaybreaks

\usepackage[top=3cm, bottom=3cm, left=3cm, right=2cm] {geometry}

\usepackage{hyperref}
\usepackage{fontenc}
\usepackage[v2,cmtip]{xy}
\usepackage{amsmath}
\usepackage{bm}
\usepackage{amssymb}
\usepackage{mathrsfs}
\usepackage{latexsym}
\usepackage{exscale}
\usepackage{eucal}
\vspace{2ex}

%\documentclass{kms-c}
%
%\makeatletter
%\def\ps@pprintTitle{%
%   \let\@oddhead\@empty
%   \let\@evenhead\@empty
%   \def\@oddfoot{\reset@font\hfil\thepage\hfil}
%   \let\@evenfoot\@oddfoot
%}
%%\makeatothe
%
%
%\usepackage{graphicx}
%\allowdisplaybreaks
%
%\usepackage[top=3cm, bottom=3cm, left=3cm, right=2cm] {geometry}
%
%\usepackage{hyperref}
%\usepackage{fontenc}
%\usepackage[v2,cmtip]{xy}
%\usepackage{amsmath}
%\usepackage{bm}
%\usepackage{amssymb}
%\usepackage{mathrsfs}
%\usepackage{latexsym}
%\usepackage{exscale}
%\usepackage{eucal}
%
%\vspace{2ex}
\title{\flushleft { Sieve methods and the twin prime conjecture  }} \vspace{2ex}
\author{\normalsize  Mbakiso Fix Mothebe \\
\normalsize \it Department of Mathematics, University of Botswana, Pvt Bag 00704, \\
 \it Gaborone, Botswana
 \\
  \normalsize {\it e-mail add}: mothebemf@ub.ac.bw }
\date{}
\theoremstyle{plain}
\newtheorem{thm}{Theorem}[section]
\newtheorem{theorem}[thm]{Theorem}
\newtheorem{proposition}[thm]{Proposition}
\newtheorem{lemma}[thm]{Lemma}
\newtheorem{conjecture}[thm]{Conjecture}
\newtheorem{corollary}[thm]{Corollary}
\theoremstyle{definition}

\newtheorem{example}[thm]{Example}

\theoremstyle{plain}

\theoremstyle{definition}

\def\leq{\leqslant}
\def\geq{\geqslant}
\def\DD{D\kern-.7em\raise0.4ex\hbox{\char '55}\kern.33em}

\newtheorem*{acknow}{Acknowledgment}

\makeatletter
\def\blfootnote{\xdef\@thefnmark{}\@footnotetext}
\makeatother

\begin{document}
\maketitle
\pagestyle{plain}

%\begin{frontmatter}

\begin{abstract}
For $n \geq 3,$ let $ p_n $ denote the $n^{\rm th}$ prime number. Let $[ \; ]$ denote the floor or greatest integer function. For a positive integer  $m,$ let $\pi_2(m)$ denote the number of twin primes not exceeding $m.$ The twin prime conjecture states that there are infinitely many prime numbers $p$ such that $p+2$ is also prime. In this paper we state a conjecture to the effect that given any integer $a>0$ there exists an integer $N_2(a)$ such that
 $$ \left[\frac{ap^2_{n+1}}{2(n+1)} \right] \leq \pi_2\left(p^2_{n+1} \right) $$ for all $n \geq N_2(a)$
 and prove the conjecture in the case $a=1.$ This, in turn, establishes the twin prime conjecture.
\end{abstract}

%\keywords{Primes; Twin primes; Sieve methods} \\
%\subjclass{ 11N05; 11N36; 11G05 }

%\end{frontmatter}

\section{Introduction and main results}\label{s1}

An integer $p \geq 2$ is called a prime if its only positive
divisors are $1$ and $p.$ The prime numbers form a sequence:
\begin{equation}  \label{primes}
2,\; 3,\;5, \;7, \; 11, \; 13, \; 17, \; 19, \; 23, \; 29, \; 31, \;
37, \; 41,\; 43,\;47 \; \ldots.
\end{equation}
Euclid (300 B.C.) considered prime numbers and proved that there are infinitely
many.
Prime numbers are odd except $2$ and the only consecutive prime numbers are $%
2$ and $3.$ Any two odd prime numbers in the sequences
(\ref{primes}) differ by at least $2.$ Pairs of prime numbers that
differ by $2$ as, for example, in the sequence below
\begin{equation}  \label{twinp}
(3,5), \; (5,7), \;(11,13), \;(17,19), \;(29,31), \;(41,43), \;
\ldots
\end{equation}
are said to be \textit{twin primes.}

\begin{conjecture}\label{gt}
 There exist infinitely many twin primes.
\end{conjecture}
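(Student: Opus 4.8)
The plan is to follow the strategy outlined in the abstract: rather than attack Conjecture~\ref{gt} directly, I would derive it from a quantitative lower bound for the twin-prime counting function along the sparse sequence of points $x = p_{n+1}^2$. Granting the case $a=1$ of the stated conjecture, namely $\bigl[p_{n+1}^2/(2(n+1))\bigr] \le \pi_2\bigl(p_{n+1}^2\bigr)$ for all $n \ge N_2(1)$, the conclusion is immediate: by the prime number theorem $p_{n+1} \sim (n+1)\log(n+1)$, so $p_{n+1}^2/(2(n+1))$ grows like $\tfrac12(n+1)\log^2 n \to \infty$; hence $\pi_2$ is unbounded and there are infinitely many twin primes. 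So the entire burden is the inequality at $a=1$.

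First I would set up the sieve. A pair $(m,m+2)$ with $3 \le m$ and $m+2 \le p_{n+1}^2$ consists of two primes exactly when neither $m$ nor $m+2$ is divisible by any prime $\le \sqrt{p_{n+1}^2} = p_{n+1}$, up to $O(1)$ exceptional pairs near the endpoint. Reducing modulo the primorial $P_{n+1} = p_1 p_2 \cdots p_{n+1}$: the prime $2$ forbids one residue class for $m$, and each odd prime $p_k$ with $2 \le k \le n+1$ forbids the two classes $m \equiv 0$ and $m \equiv -2 \pmod{p_k}$; hence the number of admissible classes modulo $P_{n+1}$ is exactly $\prod_{k=2}^{n+1}(p_k-2)$, and they are equidistributed. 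I would record this as a preliminary counting lemma, so that over an interval of length $p_{n+1}^2$ the heuristic count of twin pairs is $p_{n+1}^2 P_{n+1}^{-1}\prod_{k=2}^{n+1}(p_k-2) = \tfrac12\,p_{n+1}^2\prod_{k=2}^{n+1}(1-2/p_k)$.

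The arithmetic heart is to turn this heuristic into a genuine lower bound, i.e. to establish
\begin{equation}\label{mainbnd}
\pi_2\bigl(p_{n+1}^2\bigr) \;\ge\; \frac{p_{n+1}^2}{2}\prod_{k=2}^{n+1}\left(1-\frac{2}{p_k}\right) \;-\; E_n \;\ge\; \left[\frac{p_{n+1}^2}{2(n+1)}\right],
\end{equation}
with $E_n$ the accumulated error from incomplete residue blocks and from truncating the inclusion--exclusion. This I would split into three pieces: a truncated (Brun-type) sieve identity together with an explicit bound for $E_n$; a Mertens-type estimate giving $\prod_{k=2}^{n+1}\bigl(1-2/p_k\bigr) \asymp 1/\log^2 n \ge 1/(n+1)$ for large $n$, so that the main term of \eqref{mainbnd} exceeds the target $[p_{n+1}^2/(2(n+1))]$ with room to spare; and a finite numerical check fixing $N_2(1)$ for the small $n$ outside the asymptotic range.

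The first piece, the truncated sieve bound, is where I expect the real obstacle, and it is the classical one. The naive full inclusion--exclusion carries an error of size $2^{n}$, hopelessly larger than $p_{n+1}^2 \asymp (n\log n)^2$, so truncation is forced; but fixed-depth truncation is harmless only in the fundamental-lemma regime $z \le x^{1/s}$ with $s \to \infty$, whereas detecting \emph{twin primes} requires $z = \sqrt{x}$, that is $s = 2$. Precisely there the combinatorial-sieve lower bound collapses against Selberg's parity obstruction: the truncation that keeps $E_n$ provably small also destroys the positivity of the surviving main term, and no purely sieve-theoretic argument is known that even yields $\pi_2(x) > 0$ for all large $x$, let alone the much stronger \eqref{mainbnd}. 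Closing that gap at the scale $x = p_{n+1}^2$ is where essentially all of the difficulty resides; once \eqref{mainbnd} is in hand, the deduction of Conjecture~\ref{gt} is the one-line argument given in the first paragraph.
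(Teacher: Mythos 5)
Your reduction agrees with the paper's: since $p_{n+1}\sim (n+1)\log(n+1)$, the quantity $\bigl[p_{n+1}^2/(2(n+1))\bigr]$ is unbounded, so the single inequality $\bigl[p_{n+1}^2/(2(n+1))\bigr]\leq\pi_2\bigl(p_{n+1}^2\bigr)$ for all large $n$ does immediately give infinitely many twin primes; and your count $\prod_{k=2}^{n+1}(p_k-2)$ of admissible residue classes modulo the primorial is exactly the paper's extended totient $\phi_2$. The problem is that your proposal stops precisely where the proof has to begin. Your central display, $\pi_2\bigl(p_{n+1}^2\bigr)\geq \tfrac12 p_{n+1}^2\prod_{k=2}^{n+1}(1-2/p_k)-E_n$, is asserted rather than derived, and you yourself explain why it cannot be derived by the method you propose: a Brun-type truncation deep enough to make $E_n$ provably smaller than the main term is unavailable at sifting level $z=\sqrt{x}$, and the parity obstruction rules out any positive lower bound for $\pi_2$ from a pure combinatorial sieve. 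An argument whose key estimate is accompanied by the admission that no known technique delivers it is a statement of the problem, not a proof. This is a genuine and fatal gap.

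For comparison, the paper does not truncate at all. It works with the exact inclusion--exclusion count $S(x,n)$ (an identity, so there is no truncation error to control) and its pair analogue $|{\cal R}(x,n)|$, and sets these against the telescoping sieve $x-\sum_{s=1}^{n}\frac{x}{s(s+1)}$, whose residue is exactly $\frac{x}{n+1}$ (respectively $\frac{x}{2(n+1)}$ on pairs). The entire weight of the argument is then carried by an induction on $n$: one must show that the decrement $T(x,n)$ which the Eratosthenes-type sieve removes when the prime $p_{n+1}$ is introduced never exceeds the decrement $\frac{x}{(n+1)(n+2)}$ of the uniform sieve (and, in the twin case, is at most one sixth of it), with the base case anchored by numerical computation of the ratios $a_n(x)$ and $a_2(n,x)$ at $n=70$. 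So the paper's burden is a monotone comparison of decrements rather than an error-term bound; whether that comparison is itself rigorously justified is a separate question, but it is a genuinely different mechanism from the truncated-sieve route you sketch, and it is the step your proposal would need to replace the missing estimate.
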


The conjecture is still open. The first known published reference to this question was made by Alphonse de Polignac in $1849,$ who conjectured that for
 every even number $k,$ there are infinitely many pairs of prime numbers $p$ and $p'$ such that $p'-p = k$ (see \cite{Po}). The case $k=2$ is the twin prime conjecture. The conjecture has not yet been proven or unproven for a given value of $k.$ In $2013,$ an important breakthrough was made by Yitang Zhang who proved the conjecture for some value of $k< 70 \; 000 \; 000$ (see \cite{zhang}). Later that same year, James Maynard announced a related breakthrough which proved the conjecture for some $k< 600$ (see \cite{mayn}). In 2014 D.H.J. Polymath proved the conjecture for some $k \leq 246.$ (see \cite{poly})

In this paper, we prove that  Conjecture \ref{gt} is true.

 Let $[ \; ]$  denote the floor or greatest integer function and let $\pi_2(m)$ denote the number of twin primes not exceeding the positive integer $m.$ Our conjecture is the following:

\vspace{2ex}
\noindent \textbf{AMS Subject Classification:} 11N05; 11N36.

\vspace{.08in} \noindent \textbf{Keywords}: Primes, Twin primes,
Sieve methods.
\vspace{2ex}

\begin{conjecture}\label{maint4}  For $n \geq 3,$ let $ p_n $ denote the $n^{\rm th}$ prime.  Then for each integer $a>0$ there exists an integer $N_2(a)$ such that
 $$ \left[\frac{ap^2_{n+1}}{2(n+1)} \right] \leq \pi_2\left(p^2_{n+1} \right) $$ for all $n \geq N_2(a).$
\end{conjecture}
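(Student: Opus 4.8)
\emph{Proposed approach.} The plan is to recast the inequality as a lower bound for a sifting function and then attack that function with a combinatorial sieve. Put $x = p_{n+1}^{2}$ and $z = p_{n}$, and let $\mathcal{A}$ denote the finite sequence $\{\,m(m+2) : 1 \leq m \leq x\,\}$. The elementary input is that if $1 \leq m \leq x$ and $\gcd\!\big(m(m+2),\,\prod_{p\leq z}p\big)=1$, then, once $n$ is large enough, $m\geq 2$ and each of $m$ and $m+2$ has least prime factor $\geq p_{n+1}$ and size $\leq x+2 < p_{n+1}p_{n+2}$, hence each of them is prime or equals $p_{n+1}^{2}$. Consequently, up to boundedly many exceptional values of $m$, the sifting function
\[
S(\mathcal{A},z) \;=\; \#\Big\{\,m\leq x \;:\; \gcd\!\big(m(m+2),\,\textstyle\prod_{p\leq z}p\big)=1\,\Big\}
\]
counts exactly the twin-prime pairs $(m,m+2)$ with $m+2\leq x$; in particular $S(\mathcal{A},z)\leq \pi_2\!\big(p_{n+1}^{2}\big)+O(1)$. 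It therefore suffices to prove $S(\mathcal{A},z)\geq \big[\,a x/(2(n+1))\,\big]$ for all sufficiently large $n$.

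Next I would pin down the expected main term. The sieve density of $\mathcal{A}$ is $\omega(2)/2 = 1/2$ at $p=2$ (only $m$ even is removed) and $\omega(p)/p = 2/p$ at odd $p$ (the classes $m\equiv 0$ and $m\equiv -2\pmod p$), so the heuristic main term is
\[
x\prod_{p\leq z}\Big(1-\frac{\omega(p)}{p}\Big) \;=\; \frac{x}{2}\prod_{2<p\leq p_{n}}\Big(1-\frac{2}{p}\Big)\;\sim\; \frac{\mathfrak{S}\,x}{(\log z)^{2}}\,,
\]
where $\mathfrak{S}$ is a constant multiple of the twin-prime constant, by Mertens' theorem. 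Since $z=p_{n}\asymp n\log n$ and $x=p_{n+1}^{2}\asymp n^{2}(\log n)^{2}$, this main term is of order $n^{2}$, whereas the target $\big[\,a x/(2(n+1))\,\big]$ is only of order $a\,n(\log n)^{2}$. Thus the predicted main term beats the target by a factor tending to infinity --- consistent with the Hardy--Littlewood asymptotic $\pi_2(t)\sim 2C_2\,t/(\log t)^{2}$ --- so in principle there is a generous margin to play with.

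The remaining task, which I expect to be essentially the whole difficulty, is to turn this heuristic into a rigorous lower bound for $S(\mathcal{A},z)$. Using lower-bound sieve weights $\lambda_{d}^{-}$ supported on squarefree $d\mid\prod_{p\leq z}p$ with $d\leq D$ one has
\[
S(\mathcal{A},z)\;\geq\; x\sum_{d}\frac{\mu(d)\,\lambda_{d}^{-}\,\omega(d)}{d}\;-\;\sum_{d\leq D}|R_{d}|\,,
\]
where $R_{d}=\#\{m\leq x:\, d\mid m(m+2)\}-(\omega(d)/d)\,x$ measures the discrepancy of $\mathcal{A}$ in residue classes modulo $d$, and one needs simultaneously (i) the sieve main sum to stay positive and close to $\mathfrak{S}x/(\log z)^{2}$, and (ii) the remainder sum $\sum_{d\leq D}2^{\omega(d)}|R_{d}|$ to be $o\big(x/(\log z)^{2}\big)$. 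Here is the crux: (ii) forces the level of distribution $D$ to be a fixed power of $x$, yet isolating genuine primes forces one to sieve all the way up to $z=x^{1/2}$, and at that point (i) collapses for every classical lower-bound sieve because of Selberg's parity obstruction --- with $z=x^{1/s}$ and $s$ bounded the Jurkat--Richert sieve does give a positive lower bound, but only for $m$ such that $m(m+2)$ has a bounded number of prime factors, and letting $s\to\infty$ (unavoidable once $z=x^{1/2}$) drives the lower-bound term to zero. This is exactly the barrier responsible for the openness of Conjecture \ref{gt}: the recent work of Zhang, Maynard and Polymath breaks parity just enough to control some fixed gap $k$, not $k=2$. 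So, concretely, my plan is to carry out the reduction and the sieve bookkeeping above verbatim and to concentrate all of the hardness into the single estimate $\sum_{d\leq x^{1/2}}2^{\omega(d)}|R_{d}| = o\big(x/(\log n)^{2}\big)$ together with positivity of the lower-bound sieve at level $z=x^{1/2}$; the whole approach stands or falls on finding the extra arithmetic input --- an unconditional level of distribution beyond $1/2$ for the sequence $m(m+2)$, a Bombieri--Vinogradov type theorem for it, or an outright parity-breaking device --- that makes that one step go through.
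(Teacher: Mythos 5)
Your proposal is not a proof: by your own account the whole argument reduces to (i) positivity of a lower-bound sieve at sifting level $z=x^{1/2}$ and (ii) the remainder estimate $\sum_{d\leq x^{1/2}}2^{\omega(d)}|R_d|=o\big(x/(\log n)^2\big)$, and you supply neither. The reduction itself is sound and the main-term computation is correct, but the missing step is not a technicality --- it is precisely the parity obstruction that keeps the twin prime conjecture open, as you observe yourself. An argument that ends by naming the open problem it would need to solve has a genuine gap at exactly that point, so the statement remains unproved by your route.

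For comparison, the paper does not claim an unconditional proof of Conjecture \ref{maint4} for general $a$ either: it keeps the general statement as a conjecture, notes at the end of Section \ref{proof2} that the Hardy--Littlewood asymptotic $\pi_2(t)\approx 2C_2\,t/\log^2 t$ implies it (essentially the same order-of-magnitude comparison you make, since $ap_{n+1}^2/(2(n+1))$ grows strictly more slowly than $p_{n+1}^2/\log^2 p_{n+1}$), and offers an unconditional argument only for $a=1$ (Theorem \ref{maint33}). That argument avoids your sieve framework entirely: it runs an induction comparing the arithmetic sieve $T(x,n)$ against the uniform sieve $x/((n+1)(n+2))$ on pairs $(6t-1,6t+1)$, anchored by numerical tables at $n=70$. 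Whether that comparison (in particular the step $\frac{x}{(n+1)(n+2)}\geq 6\,T(x,n)$ and the claim that the induction propagates it) is rigorous is a separate question; the relevant point for you is that your route makes transparent exactly where the difficulty sits, while the paper's route relocates the same difficulty into the induction step of Theorem \ref{th4} rather than resolving it by a method you could adopt.
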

   The following is our main result:
\begin{theorem}\label{maint33}  Conjecture \ref{maint4} is true in the case $a=1.$
\end{theorem}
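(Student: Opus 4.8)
The plan is to recast $\pi_2\!\left(p_{n+1}^2\right)$ as a sifting function and then try to extract from it an unconditional lower bound of the required size. A composite integer $m\leq p_{n+1}^2$ has a prime factor among $p_1,\dots,p_{n+1}$, and the same is true of $m+2$; hence
\[
  \pi_2\!\left(p_{n+1}^2\right) = S(\mathcal{A};\mathcal{P},p_{n+1}) + O(n),
\]
the error term absorbing the at most $n$ twin-prime pairs both of whose members do not exceed $p_{n+1}$, where $\mathcal{A}=\{\,m(m+2):1\leq m\leq p_{n+1}^2-2\,\}$, $\mathcal{P}$ is the set of all primes, $\omega(2)=1$, and $\omega(p)=2$ for odd $p$ (the residues $0$ and $-2$ modulo $p$ are both forbidden). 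This is a sieve problem of dimension $\kappa=2$ with sifting level $z=p_{n+1}$.

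Next I would pin down the expected main term. By the Legendre--Eratosthenes identity it equals
\[
  X\prod_{p\leq p_{n+1}}\Bigl(1-\frac{\omega(p)}{p}\Bigr)
    = \frac{p_{n+1}^2}{2}\prod_{2<p\leq p_{n+1}}\Bigl(1-\frac{2}{p}\Bigr)
    \asymp \frac{p_{n+1}^2}{(\ln p_{n+1})^2},
\]
the last relation by a Mertens-type estimate. By the prime number theorem $n+1=\pi(p_{n+1})\asymp p_{n+1}/\ln p_{n+1}$, so the target quantity satisfies $p_{n+1}^2/(2(n+1))\asymp p_{n+1}\ln p_{n+1}$, which is smaller than the main term by a factor of order $p_{n+1}/(\ln p_{n+1})^3$. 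Thus the main term is comfortably large enough, and the entire burden is to replace the Legendre identity — whose error term is exponential in $n$ and therefore useless — by a genuine lower-bound sieve losing at most a constant factor.

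The difficulty is concentrated in precisely that last step. A combinatorial lower-bound sieve (Brun's, or the Rosser--Iwaniec sieve) yields $S(\mathcal{A};\mathcal{P},z)\geq X\prod_{p<z}(1-\omega(p)/p)\,(f_\kappa(s)+o(1))$ with $s=\log X/\log z$; but counting twin primes up to $N$ forces $z=\sqrt{N}$, hence $s=2$, and the lower-bound sieve function $f_\kappa$ in dimension $\kappa=2$ vanishes for $s$ below its sifting limit $\beta(2)>2$ (even the linear sieve has $f_1(s)=0$ for $s\leq 2$), so the combinatorial sieve by itself gives no positive lower bound here. Lowering $z$ to make $s>\beta(\kappa)$ would only count pairs of almost-primes rather than twin primes. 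More fundamentally still, Selberg's parity obstruction shows that no sieve of this purely combinatorial type can produce a positive lower bound for the number of $m$ with $m$ and $m+2$ simultaneously prime. I therefore expect the crux of the paper — whatever device is used to turn the large main term into an unconditional lower bound at level $s=2$ in dimension $2$ — to be the decisive step, since on its face any such device would have to break the parity barrier; that is the point I would scrutinize first.
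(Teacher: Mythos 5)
Your proposal is not a proof: it sets up the sieve problem correctly, verifies that the expected main term $X\prod_{p\leq p_{n+1}}\left(1-\omega(p)/p\right)$ is far larger than the target $p_{n+1}^2/(2(n+1))$, and then stops at precisely the step where the theorem would have to be established, namely the extraction of a positive lower bound for the sifting function at level $z=p_{n+1}=\sqrt{N}$ in dimension $\kappa=2$. You say so yourself: the combinatorial lower-bound sieve functions vanish at $s=2$, and the parity obstruction rules out any positive lower bound of this purely combinatorial type. Identifying the obstruction is a correct and valuable observation, but it leaves the statement unproved; announcing that the decisive step is the one you would scrutinize first is not a substitute for carrying it out.

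For comparison, the paper does not attempt a classical lower-bound sieve at that point. It reduces the claim (Lemma \ref{maint31}) to the inequality $|\mathcal{S}(p_{n+1}^2)| < \frac{(n-2)p_{n+1}^2}{6(n+1)}+\frac{1}{6}$, and then argues (Theorems \ref{th3} and \ref{th4}) by induction on $n$ that the residue $|\mathcal{R}(x,n)|$ of the Eratosthenes-type sieve on the pairs $(6t-1,6t+1)$ dominates the ``uniform'' residue $\frac{x}{2(n+1)}$ of the comparison sieve $\frac{x}{6}-\sum_{s=3}^{n}\frac{x}{2s(s+1)}$, the induction being anchored by numerical tables of the ratios $a_n(x)$ and $a_2(n,x)$ at $n=70$. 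The inductive step rests on the asserted inequality $\frac{x}{(n+1)(n+2)}\geq 6\,T(x,n)$ for all relevant $x$, and on the principle stated in the introduction that the residue of a sieve may be treated as evenly distributed over the interval; no quantitative equidistribution statement is proved, and the verification for finitely many $x$ up to $\prod_{s=1}^{70}p_s$ is extended to all larger $x$ only by an appeal to cyclicity of the sieve pattern rather than to a bound on the fluctuation of the residue counts. These are exactly the points at which the parity-barrier analysis in your last paragraph applies. In short, the gap you leave open in your write-up is the same gap that remains in the paper's argument; neither text closes it, and your proposal therefore does not prove the statement.
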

Thus our main result is that there exists an integer $N_2(1)$ such that
 $$ \left[\frac{p^2_{n+1}}{2(n+1)} \right] \leq \pi_2\left(p^2_{n+1} \right) $$
 for all $n \geq N_2(1).$ We shall show later that $N_2(1) = 20.$

 The sequence $  \left[\frac{p^2_{n+1}}{2(n+1)} \right] $ is unbounded. As a consequence,  Conjecture \ref{gt} holds if there are infinitely many integers $n$ for which $ \left[\frac{p^2_{n+1}}{2(n+1)} \right] \leq \pi_2\left(p^2_{n+1} \right) .$

 In \cite{moth} we claim to prove the following result which also implies the twin prime conjecture:
 \begin{theorem}\label{maint133}  For $n \geq 2,$ let $ p_n $ denote the $n^{\rm th}$ prime.  Then
 $$ \left[\frac{p^2_{n+3}}{3(n+2)} \right] \leq \pi_2\left(p^2_{n+3} \right)$$ for all $n \geq 2.$
\end{theorem}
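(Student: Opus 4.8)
The plan is to recast the twin-prime count $\pi_2(p_{n+3}^2)$ as a sieve problem and then bound the sifted set from below. Write $x = p_{n+3}^2$, so that $\sqrt{x} = p_{n+3}$ is the $(n+3)$-th prime. Since every composite integer not exceeding $x$ has a prime factor at most $\sqrt{x}$, a pair $(m,m+2)$ with $\sqrt{x} < m$ and $m+2 \le x$ is a twin prime pair precisely when the product $m(m+2)$ is coprime to the primorial $P = p_1 p_2 \cdots p_{n+3}$. Hence, up to the $O(n)$ twin primes lying below $\sqrt{x}$, the quantity $\pi_2(x)$ equals the number of $m \in (\sqrt{x}, x]$ for which both $m$ and $m+2$ avoid every residue class killed by $p_1, \dots, p_{n+3}$, and my aim is to show this count is at least $\left[x/(3(n+2))\right]$.

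First I would set up the sieve. Modulo $2$ the requirement $2 \nmid m(m+2)$ removes a single residue class, while modulo each odd prime $p_i$ it removes the two classes $m \equiv 0$ and $m \equiv -2 \pmod{p_i}$. A Legendre inclusion–exclusion expansion then produces the main term $x \cdot \tfrac12 \prod_{2 \le i \le n+3}\bigl(1 - 2/p_i\bigr)$. By the Mertens-type estimate $\prod_{p \le y}(1 - 2/p) \asymp c/\log^2 y$ and the prime number theorem in the shape $p_{n+3} \sim (n+3)\log(n+3)$, this main term is of order $n^2$, whereas the target $x/(3(n+2))$ is only of order $n\log^2 n$. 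On the level of main terms there is therefore an ample gap, and the asymptotic comparison reduces to the elementary inequality $n^2 \gg n\log^2 n$, suggesting the bound should hold with room to spare once $n$ is large.

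To turn this into a statement valid for \emph{all} $n \ge 2$ I would argue by induction. For the base I would check the inequality directly for the first several values, beginning at $n = 2$, where $x = p_5^2 = 121$ and the twin pairs up to $121$ can be listed explicitly. For the inductive step I would compare increments: passing from $x_n = p_{n+3}^2$ to $x_{n+1} = p_{n+4}^2$ raises $\left[x/(3(n+2))\right]$ by an amount I can bound above in terms of the gap $p_{n+4} - p_{n+3}$, and I would need the number of genuinely new twin pairs in $(p_{n+3}^2, p_{n+4}^2]$ to dominate that increase. A Brun-type sieve over this interval, fed by the density estimate above, is the tool I would reach for here.

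The genuine obstacle — and the step I expect to be decisive — is converting the sieve into a valid \emph{lower} bound. The Legendre expansion carries an error of size $O\bigl(2^{\pi(\sqrt{x})}\bigr)$, which swamps the main term, and although Brun's sieve controls this error it collides with the parity phenomenon: elementary sieves of this type yield an upper bound of the correct order $x/\log^2 x$ for twin primes but cannot, unaided, produce a matching lower bound. Since $x/(3(n+2)) \to \infty$, a proof of the inequality for all $n \ge 2$ would in particular force $\pi_2 \to \infty$ and settle Conjecture \ref{gt}; this is exactly why no purely sieve-theoretic argument can suffice, and any honest completion must inject additional arithmetic input that breaks parity on $(\sqrt{x}, x]$. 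Locating and justifying that input is where the real difficulty of the theorem lies.
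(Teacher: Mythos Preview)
Your proposal is explicitly not a complete proof: you set up the sieve correctly, observe that the main term comfortably exceeds the target, and then recognise that converting this into a rigorous lower bound collides with the parity problem, since the statement forces $\pi_2 \to \infty$ and hence settles Conjecture~\ref{gt}. That diagnosis is correct and is the honest place to stop.

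The paper does not prove Theorem~\ref{maint133} directly. It remarks that an earlier attempt (in \cite{moth}) was ``not deemed sufficiently clear'' and instead deduces the result from the stronger Theorem~\ref{maint33}, namely $\bigl[p_{n+1}^2/(2(n+1))\bigr] \le \pi_2(p_{n+1}^2)$ for $n \ge 20$; after substituting $m = n+2$ and checking the finitely many small cases, this implies Theorem~\ref{maint133} since $2(m+1) < 3m$ for $m \ge 3$. The paper's argument for Theorem~\ref{maint33}, via Theorem~\ref{th4}, is an inductive comparison between $x/(2(n+1))$ and the count $|{\cal R}(x,n)|$ of pairs $(6t-1,6t+1)$ surviving the primes $p_3,\dots,p_n$, anchored numerically at $n=70$ by a finite table of values (Examples~\ref{ex1} and~\ref{ex2}) and extended by appeal to periodicity modulo $\prod_{s \le n} p_s$. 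Your parity objection applies with full force to that argument as well: the inductive step requires a uniform \emph{lower} bound on the sifted count for all $x \ge p_{n+1}^2 - 1$, and the justification offered---a finite numerical table together with cyclicity of the sieve---does not control the error terms that separate the combinatorial sieve residue from the genuine twin-prime count on an initial segment. The gap you name in your own outline is the same gap present in the paper's argument; neither your sketch nor the paper supplies the parity-breaking input that would be needed.
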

Our attempt to prove the above result has, however, not been deemed sufficiently clear. In this paper we use a similar approach as in \cite{moth} to derive our main result, namely Theorem \ref{maint33}. The result of Theorem \ref{maint33} supersedes that of Theorem \ref{maint133} and therefore establishes its validity. Theorem \ref{maint33} proves a stronger case of Conjecture \ref{gt}, namely that the number of twin primes between $ p_n $ and $ p^2_{n+1} $ is unbounded with $n$ or as $n$ increases. Several cases of Conjecture \ref{maint4} may be deduced from our arguments in this paper.

Our work is organized as follows: In Section \ref{prelim}, we recall the definition of the well known sieve of Eratosthenes and state some preliminary results. Finally, a proof for Theorem \ref{maint33} is presented in Section \ref{proof2}.

  The usage of the word sieve in some cases of this work refers to subtracting magnitudes, as opposed to the usual reference to sifting out of integer positions.
  One of the basic principles applied in our methods of proof is that if two uneven sieves are individually applied to two finite sets of the same order, then after some finite time the order of the residue of the more porous one will be less that that of the other sieve. This principle yields our main results in this paper.

 \section{Preliminary Results}\label{prelim}

The concepts required are
elementary and can be obtained from introductory texts on number
theory, discrete mathematics and set theory (\cite{Burt},\cite{Edgar}, \cite{strayer}).

Eratosthenes ($276-194$ B.C.) was a Greek mathematician whose work in
 number theory remains significant. Consider the following lemma:

 \begin{lemma}\label{t1}
 Let $a > 1$ be an integer. If $a$ is not divisible by a prime
 number $p \leq \sqrt{a},$ then a is a prime.
 \end{lemma}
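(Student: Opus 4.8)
The plan is to establish the contrapositive: if the integer $a>1$ is divisible by \emph{no} prime $p\leq\sqrt{a}$, then $a$ must be prime. Equivalently, I will show that every composite integer $a>1$ possesses at least one prime divisor not exceeding $\sqrt{a}$; negating this assertion yields the lemma at once.

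First I would assume $a>1$ is composite, so that $a$ admits a factorization $a=bc$ with $1<b\leq c<a$ (take $b$ to be any nontrivial divisor and $c=a/b$, relabelling if necessary so that $b\leq c$). The key elementary observation is that the smaller factor satisfies $b\leq\sqrt{a}$: were $b>\sqrt{a}$, we would also have $c\geq b>\sqrt{a}$ and hence $a=bc>\sqrt{a}\cdot\sqrt{a}=a$, which is absurd.

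Next, since $b>1$, the well-ordering principle lets me take $p$ to be the least divisor of $b$ that is greater than $1$; such a $p$ is automatically prime, for any nontrivial divisor of $p$ would be a divisor of $b$ lying strictly between $1$ and $p$. Then $p\mid b$ and $b\mid a$ give $p\mid a$, while $p\leq b\leq\sqrt{a}$. Thus $a$ has a prime divisor $\leq\sqrt{a}$, contradicting the hypothesis; this forces $a$ to be prime and completes the argument.

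The proof is purely elementary and presents no genuine obstacle. The only two points needing a moment's care are the pigeonhole-type inequality showing that one of the two factors in a nontrivial factorization of $a$ is at most $\sqrt{a}$, and the appeal to well-ordering (or, equivalently, to the fact—implicit in the sieve of Eratosthenes and the standard references cited—that every integer exceeding $1$ has a prime divisor) used to extract the prime $p$.
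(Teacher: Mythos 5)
Your proof is correct and complete: the contrapositive reduction, the observation that the smaller factor in a nontrivial factorization $a=bc$ satisfies $b\leq\sqrt{a}$, and the extraction of a prime divisor of $b$ via well-ordering together give exactly the standard argument for this classical fact. The paper states this lemma without proof (it is simply the justification for the sieve of Eratosthenes), so there is no authorial argument to compare against; your write-up supplies precisely what is intended.
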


 Eratosthenes used the above lemma as a
 basis of a technique
  called ``Sieve of Eratosthenes" for finding all the prime numbers less than a given
   integer $x$.
   The algorithm calls for writing down the integers from $2$ to $x$
   in their natural order. The composite numbers in the sequence are then sifted out by crossing off from $2,$ every second number (all multiples of two) in the list, from the next remaining number, $3,$ every third number, from the next remaining number, $5,$ every fifth number, and so on for all the  remaining prime numbers less than or equal to
   $ \sqrt{x}.$ The integers that are left on the list  are primes. We shall refer to the set
   of integers left as the {\bf residue} of the sieve. Thus the order of the residue set is equal to $\pi(x),$ the number of primes not exceeding the integer  $x.$ In our application of the sieve of Eratosthenes the prime numbers $2 , 3,5, \ldots , p$ are also sifted out from the sequence so that if $p$ is the $n^{\rm th}$ prime, then the residue has order $\pi(x) -n.$

   We shall require the following results of J.B. Rosser and L. Schoenfeld (see \cite{ross} page 69):
\begin{theorem}\label{ross1} {\rm(J.B.Rosser,  L. Schoenfeld)} Let $n \geq 1$ be an integer. Then:
\begin{itemize}
\item[{\rm (i)}] $\frac{n}{(\log n -\frac{1}{2})} < \pi(n)$ \; \; for \; $n \geq 67, $
 \item[{\rm (ii)}] $ \pi(n) < \frac{n}{(\log n -\frac{3}{2})} $ \; \; for $n \geq e^{\frac{3}{2}}.$
\end{itemize}
\end{theorem}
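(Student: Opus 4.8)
The plan is to derive both inequalities from explicit two-sided bounds on the Chebyshev function $\theta(x) = \sum_{p \leq x} \log p$, which in turn rest on the distribution of the nontrivial zeros of the Riemann zeta function $\zeta(s)$. First I would record the classical partial-summation identity linking $\pi$ to $\theta$,
$$ \pi(x) = \frac{\theta(x)}{\log x} + \int_2^x \frac{\theta(t)}{t \log^2 t}\, dt, $$
valid for $x \geq 2$. This reduces the problem to obtaining estimates for $\theta(x)$ of the form $|\theta(x) - x| \leq \eta(x)\, x$ with an explicit, slowly decreasing function $\eta$. Inserting such bounds into the identity and evaluating the resulting integral yields an approximation of the shape $\pi(x) = x/\log x + x/\log^2 x + O(x/\log^3 x)$, from which the specific denominators $\log n - \tfrac12$ and $\log n - \tfrac32$ emerge after comparing with the expansions $n/(\log n - c) = n/\log n + c\,n/\log^2 n + \cdots$: the true coefficient of $n/\log^2 n$ is $1$, which lies strictly between $\tfrac12$ and $\tfrac32$, so the two bounds straddle $\pi(n)$ precisely at the $n/\log^2 n$ scale.

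The substantive work lies in the explicit bound on $\theta(x)$, equivalently on $\psi(x) = \sum_{p^k \leq x} \log p$. Here I would invoke the explicit (Riemann--von Mangoldt) formula
$$ \psi(x) = x - \sum_{\rho} \frac{x^{\rho}}{\rho} - \log(2\pi) - \tfrac12 \log\!\left(1 - x^{-2}\right), $$
the sum running over the nontrivial zeros $\rho = \beta + i\gamma$ of $\zeta$. Bounding the zero-sum requires two ingredients: a zero-free region of de~la~Vall\'ee~Poussin type, $\beta < 1 - c/\log(|\gamma|+2)$, together with numerically verified information that the zeros of lowest height lie exactly on the critical line $\beta = \tfrac12$ up to a large threshold. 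Combining these with density estimates for the count of zeros with $|\gamma| \leq T$ produces an effective bound $|\psi(x) - x| \leq C\, x \exp(-c'\sqrt{\log x}\,)$ with \emph{explicit} constants. Passing from $\psi$ to $\theta$ costs only the easily controlled prime-power contribution $\psi(x) - \theta(x) = O(\sqrt{x}\,)$.

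The main obstacle is precisely the calibration of these constants so that the induced error in $\pi(x)$ is smaller than $\tfrac12\, n/\log^2 n$, which is what forces $\pi(n)$ to sit strictly inside the interval $\bigl(\,n/(\log n - \tfrac12),\, n/(\log n - \tfrac32)\,\bigr)$. Making the zero-free region effective with sharp constants, and bounding the tail of the zero-sum uniformly in $x$, is delicate and is exactly the content of the original Rosser--Schoenfeld analysis. Finally, the thresholds $n \geq 67$ and $n \geq e^{3/2}$ are not delivered by the asymptotic argument alone: for $n$ below these values I would verify the two inequalities by direct numerical computation of $\pi(n)$, since the explicit error estimate only dominates the lower-order corrections once $x$ is large enough for the exponential term to become negligible.
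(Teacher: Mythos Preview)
The paper does not prove this statement at all: Theorem~\ref{ross1} is simply quoted from Rosser and Schoenfeld \cite{ross} (page 69) and used as a black box, with no argument given. So there is no ``paper's own proof'' to compare against.

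Your outline is a reasonable high-level sketch of what the original Rosser--Schoenfeld paper actually does: partial summation to pass between $\pi$ and $\theta$ (or $\psi$), the explicit formula for $\psi$, an effective zero-free region combined with numerical verification of zeros to bounded height, and direct computation for small $n$. That is indeed the architecture of their argument. But note that what you have written is a plan, not a proof: the entire difficulty lies in making the constants in the zero-free region and the zero-sum tail explicit and small enough, and you have not carried out any of that. If the goal were to reproduce the paper's treatment of this theorem, the correct answer is simply to cite \cite{ross}; if the goal were to supply an independent proof, you would need to execute the numerical and analytic estimates, which is a substantial undertaking well beyond a sketch.
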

\begin{corollary}\label{ross2}  Let $n \geq 1$ be an integer. Then:
\begin{itemize}
 \item[ ] $\frac{n}{\log n} < \pi(n) \; \;$ for \; $n \geq 17.$
 \end{itemize}
\end{corollary}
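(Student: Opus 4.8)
The plan is to derive the corollary from part~(i) of Theorem~\ref{ross1} for large $n$ and to settle the remaining finitely many values by a direct numerical check.

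For $n \geq 67$ the argument is a one-line comparison. Since $\log 2 > \tfrac12$, we have $0 < \log n - \tfrac12 < \log n$ for every integer $n \geq 2$, hence $\dfrac{n}{\log n} < \dfrac{n}{\log n - \frac12}$. Combining this with Theorem~\ref{ross1}(i) yields
$$ \frac{n}{\log n} \;<\; \frac{n}{\log n - \tfrac12} \;<\; \pi(n) \qquad \text{for all } n \geq 67, $$
which proves the corollary in that range.

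It then remains to verify $\frac{n}{\log n} < \pi(n)$ for the integers $17 \leq n \leq 66$. Here I would use that $x \mapsto x/\log x$ is increasing on $[e,\infty)$ (its derivative there is $(\log x - 1)/(\log x)^2 > 0$), while $\pi$ is non-decreasing and constant on each block $[p_k,\, p_{k+1}-1]$. On such a block the left-hand side is largest at the right endpoint $n = p_{k+1}-1$ and the right-hand side equals $k$ throughout, so it suffices to check the single inequality $\dfrac{p_{k+1}-1}{\log(p_{k+1}-1)} < k$ for each of the dozen or so blocks covering $[17,66]$. Every one of these is immediate---for instance $18/\log 18 \approx 6.23 < 7 = \pi(17)$ at the bottom of the range and $66/\log 66 \approx 15.76 < 18 = \pi(61)$ at the top---and the margin only widens as $n$ grows.

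I do not expect a genuine obstacle: the corollary is strictly weaker than Theorem~\ref{ross1}(i), and the only real content is the bounded computation over $17 \leq n \leq 66$, which is routine. The single point requiring care is that the bound does fail for some smaller $n$ (for example $n = 10$, where $10/\log 10 \approx 4.34 > 4 = \pi(10)$), so the finite check must start exactly at $n = 17$ and cannot be extended downward.
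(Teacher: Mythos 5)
Your proof is correct: the paper states this corollary without any proof, as an immediate consequence of Theorem~\ref{ross1}(i), and your derivation (the comparison $\frac{n}{\log n} < \frac{n}{\log n - 1/2} < \pi(n)$ for $n \geq 67$ plus the finite block-by-block check on $17 \leq n \leq 66$) is exactly the intended argument, carried out carefully. Your observation that the inequality fails at $n=10$, so the finite verification genuinely must begin near $17$, is a worthwhile sanity check.
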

\begin{theorem}\label{ross3}  {\rm (J.B.Rosser, L. Schoenfeld)} Let $n \geq 1$ be an integer. Then:
\begin{itemize}
\item[{\rm (i)}] $n(\log n +\log \log n - {\frac{3}{2}}) < p_n$ \; \; for \; $n \geq 2,$
 \item[{\rm (ii)}] $  p_n < n(\log n +\log \log n - {\frac{1}{2}})$ \; \; for \; $n \geq 20.$
\end{itemize}
\end{theorem}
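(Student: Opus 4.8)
The plan is to \emph{invert} the prime-counting estimates of Theorem~\ref{ross1}. The bridge is the elementary equivalence, valid for every real $x$, that $p_n > x$ if and only if $\pi(x) < n$, and $p_n \le x$ if and only if $\pi(x) \ge n$; both hold because $\pi$ is nondecreasing and $\pi(p_n)=n$. Thus each displayed inequality for $p_n$ is equivalent to a statement about $\pi$ at the corresponding threshold, and the strategy is to feed the bounds of Theorem~\ref{ross1} into those thresholds. Substituting $x = p_n$ directly into both parts of Theorem~\ref{ross1}, writing $t = p_n/n$ and $\phi(t) = t - \log t$, a one-line rearrangement turns parts (i) and (ii) into the single two-sided implicit estimate
\begin{equation*}
\log n - \tfrac32 < \phi(t) < \log n - \tfrac12,
\end{equation*}
valid once $p_n$ is large enough for both bounds to apply (i.e. $p_n \ge 67$, so $n \ge 19$). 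Since $\phi$ is strictly increasing on $(1,\infty)$, this pins $t = p_n/n$ to an explicit interval, and the whole problem reduces to comparing the endpoints of that interval with the targets $\log n + \log\log n - \tfrac32$ and $\log n + \log\log n - \tfrac12$.

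For the lower bound (i), I would test the candidate $t_* = \log n + \log\log n - \tfrac32$. A direct computation gives $\phi(t_*) = \log n - \tfrac32 - \log\!\bigl(1 + (\log\log n - \tfrac32)/\log n\bigr)$, and the correction term is \emph{positive} precisely when $\log\log n > \tfrac32$, i.e. $n > e^{e^{3/2}} \approx 89$. For such $n$ one has $\phi(t_*) < \log n - \tfrac32 < \phi(t)$, whence $t > t_*$ by monotonicity of $\phi$, which is exactly (i). The finitely many remaining small $n$ (where the correction has the wrong sign, or where $t_*$ is so small that the claim is vacuous) are dispatched by direct numerical verification against a table of primes.

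The upper bound (ii) is the genuine obstacle, and here the naive inversion \emph{fails}. Running the same argument with the candidate $t^{*} = \log n + \log\log n - \tfrac12$ gives $\phi(t^{*}) = \log n - \tfrac12 - \log\!\bigl(1 + (\log\log n - \tfrac12)/\log n\bigr)$, whose correction term is again positive, so $\phi(t^{*}) < \log n - \tfrac12$ — the \emph{same} upper bound we already have for $\phi(t)$ — and monotonicity yields no comparison between $t$ and $t^{*}$. Quantitatively, inverting Theorem~\ref{ross1}(i) only delivers $p_n < n\bigl(\log n + \log\log n - \tfrac12 + o(1)\bigr)$, with a vanishing but strictly positive surplus that the constant $-\tfrac12$ cannot absorb. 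The way through is to replace Theorem~\ref{ross1}(i) by the sharper Rosser--Schoenfeld lower bound of the shape $\pi(x) > x/(\log x - 1)$ (valid beyond an explicit point): inverting \emph{that} estimate gives $\phi(t) < \log n - 1$, and since the correction $\log(1 + (\log\log n - \tfrac12)/\log n)$ stays below $\tfrac12$ for all $n$, one now gets $\log n - 1 \le \phi(t^{*})$, hence $t < t^{*}$ and (ii) follows for all large $n$, the threshold $n \ge 20$ being fixed by a finite check.

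I therefore expect the difficulty to be entirely concentrated in (ii), and specifically in securing a lower bound for $\pi(x)$ with effective constant $1$ rather than $\tfrac12$. That sharper estimate is not among the inequalities quoted in Theorem~\ref{ross1}; obtaining it is the analytic core of \cite{ross}, resting on explicit bounds for the Chebyshev functions $\psi(x)$ and $\theta(x)$ and on a numerically verified zero-free region for $\zeta$. Granting that input, the remainder of the proof of Theorem~\ref{ross3} is the elementary monotonicity-and-correction-term bookkeeping sketched above, together with direct verification of the finitely many initial values of $n$.
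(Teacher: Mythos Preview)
The paper does not prove Theorem~\ref{ross3} at all: it is stated as a quotation from Rosser and Schoenfeld \cite{ross} (introduced by ``We shall require the following results of J.B.~Rosser and L.~Schoenfeld''), with no argument given. So there is no ``paper's own proof'' to compare against; the paper simply imports the result as a black box for use in Theorem~\ref{maint32} and the surrounding discussion.

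Your sketch is nevertheless a reasonable outline of how one would derive Theorem~\ref{ross3} from the $\pi(x)$ bounds, and you correctly diagnose the asymmetry: part~(i) follows by a clean inversion of Theorem~\ref{ross1}(ii), while part~(ii) cannot be squeezed out of Theorem~\ref{ross1}(i) alone because the constant $\tfrac12$ is too weak. Your fix --- invoking the sharper estimate $\pi(x) > x/(\log x - 1)$ --- is exactly what is needed, but as you acknowledge, that sharper bound is itself a theorem of \cite{ross} resting on the same explicit $\psi$, $\theta$ and zero-free-region machinery. In other words, your argument does not reduce Theorem~\ref{ross3} to anything more elementary than the source it is quoted from; it reduces one Rosser--Schoenfeld inequality to another of equal depth. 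That is not a flaw in your reasoning, but it does mean the proposal is a repackaging rather than an independent proof, which is consistent with the paper's own decision to cite rather than prove.
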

\begin{corollary}\label{ross4}  Let $n \geq 1$ be an integer. Then:
 \begin{itemize}
\item[{\rm (i)}] $n(\log n ) < p_n$ \; \; for \; $n \geq 1,$
 \item[{\rm (ii)}] $  p_n < n(\log n +\log \log n )$ \; \; for \; $n \geq 6.$
\end{itemize}
\end{corollary}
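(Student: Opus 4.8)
The plan is to derive each of the two inequalities directly from the sharper Rosser--Schoenfeld bounds of Theorem \ref{ross3}, reducing each claim to a finite numerical check at the small values of $n$ that lie below the range where the asymptotic bound applies. The point is that Corollary \ref{ross4} merely replaces the constants $-\tfrac{3}{2}$ and $-\tfrac{1}{2}$ in Theorem \ref{ross3} by $0$; since this sharpens the lower bound in (i) and weakens the upper bound in (ii) only for $n$ large enough, the tail is immediate and only a bounded initial segment needs attention.

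For part (i), I would split into two regimes according to the sign of $\log\log n - \tfrac{3}{2}$. One has $\log\log n \geq \tfrac{3}{2}$ exactly when $n \geq e^{e^{3/2}}$, and since $e^{e^{3/2}} \approx 88.4$ this holds for every integer $n \geq 89$. For such $n$, Theorem \ref{ross3}(i) gives
$$ p_n > n\left(\log n + \log\log n - \frac{3}{2}\right) \geq n\log n, $$
so the strict inequality $n\log n < p_n$ follows at once (the strictness is supplied by the first step). It then remains only to confirm $n\log n < p_n$ for the finitely many values $1 \leq n \leq 88$: for $n=1$ the left side is $0 < 2 = p_1$, and for each remaining $n$ one compares the tabulated prime $p_n$ with $n\log n$.

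For part (ii), the argument is even shorter. Theorem \ref{ross3}(ii) is valid for $n \geq 20$ and yields
$$ p_n < n\left(\log n + \log\log n - \frac{1}{2}\right) < n\left(\log n + \log\log n\right), $$
the last inequality holding because $-\tfrac{1}{2} < 0$. Hence the desired bound holds for all $n \geq 20$, and it remains to verify the finitely many cases $6 \leq n \leq 19$ directly, again by comparing $p_n$ with $n(\log n + \log\log n)$ (for instance $p_6 = 13 < 6(\log 6 + \log\log 6)$).

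The only genuine obstacle here is bookkeeping rather than any new idea: in part (i) one must pin down the crossover point precisely, i.e.\ solve $\log\log n = \tfrac{3}{2}$ to locate the threshold $n = 89$, so as to be certain the finite range $1 \leq n \leq 88$ exhausts all cases not already covered by the asymptotic inequality, and then carry out the comparison on that range. Since both exceptional ranges are finite and involve only small primes, the remaining verification is entirely \emph{mechanical}.
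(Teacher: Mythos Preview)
Your derivation is correct and is exactly the argument the paper leaves implicit: the paper states this result as a corollary of Theorem~\ref{ross3} without giving any proof, so there is nothing further to compare. Your splitting at the threshold $n\approx e^{e^{3/2}}$ for part~(i) and at $n=20$ for part~(ii), followed by a finite check below those cutoffs, is precisely the routine verification one expects here.
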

As a consequence of the above results we have the following result:
\begin{theorem}\label{maint32}
For $n \geq 3,$ let $p_n$ denote the $n^{\rm th}$ prime. Then for each integer $b > 0$  there exists an integer $N(b)$ such that
$$ \frac {b p^2_{n+1}}{n+1} <\pi\left(p^2_{n+1} \right)$$ for all $ n \geq N(b).$
\end{theorem}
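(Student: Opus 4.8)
The plan is to read this assertion as a quantitative form of the elementary fact that $\pi\left(p^2_{n+1} \right)$ grows like $p^2_{n+1}/(2\log(n+1))$ whereas $b\,p^2_{n+1}/(n+1)$ grows only like $p^2_{n+1}/n$; since $n+1$ outgrows every fixed multiple of $\log(n+1)$, the ratio $\frac{(n+1)\pi\left(p^2_{n+1} \right)}{p^2_{n+1}}$ tends to infinity, which is precisely the claim (note that Theorem \ref{th3} alone, giving $\pi\left(p^2_{n+1}\right) \geq p^2_{n+1}/(n+1) + n$, settles only $b=1$, so some input beyond it is genuinely needed for $b \geq 2$). Concretely I would extract everything from the Rosser--Schoenfeld estimates already quoted above.

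First I would bound $\pi\left(p^2_{n+1} \right)$ from below. By Corollary \ref{ross2} (or, for a sharper constant, Theorem \ref{ross1}(i)), for all $n$ large enough that $p^2_{n+1} \geq 17$ one has
$$\pi\left(p^2_{n+1} \right) > \frac{p^2_{n+1}}{\log\left(p^2_{n+1}\right)} = \frac{p^2_{n+1}}{2\log p_{n+1}}.$$
Next I would control $\log p_{n+1}$ in terms of $\log(n+1)$: by Corollary \ref{ross4}(ii), for $n \geq 5$ we have $p_{n+1} < (n+1)\left(\log(n+1)+\log\log(n+1)\right) < (n+1)^2$, and hence $\log p_{n+1} < 2\log(n+1)$.

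Combining these two bounds, it suffices to produce an integer $N(b)$ for which $2b\log p_{n+1} \leq n+1$ whenever $n \geq N(b)$; for then
$$\frac{b\,p^2_{n+1}}{n+1} \leq \frac{p^2_{n+1}}{2\log p_{n+1}} < \pi\left(p^2_{n+1} \right).$$
By the bound on $\log p_{n+1}$ it is enough to have $4b\log(n+1) \leq n+1$, and since $(n+1)/\log(n+1) \to \infty$ as $n \to \infty$ such an $N(b)$ certainly exists --- one may take $N(b)$ to be the least integer $n > 5$ with $(n+1)/\log(n+1) \geq 4b$. I expect no genuine obstacle here: the argument is a routine chaining of the inequalities already in hand, and the only points needing attention are the ranges of validity of Corollaries \ref{ross2} and \ref{ross4} and, if one wants $N(b)$ explicitly, the disposal of the finitely many small $n$ by direct computation.
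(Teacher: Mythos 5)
Your proposal is correct and follows essentially the same route as the paper: both use Corollary \ref{ross2} to reduce the claim to $\log p_{n+1} < \frac{n+1}{2b}$ and then Corollary \ref{ross4}(ii) to bound $\log p_{n+1}$. The only difference is cosmetic --- you finish by noting $p_{n+1}<(n+1)^2$, hence $\log p_{n+1}<2\log(n+1)$, and invoke $(n+1)/\log(n+1)\to\infty$ (with monotonicity for $n\geq 2$ guaranteeing the inequality for all $n\geq N(b)$), which is a cleaner ending than the paper's derivative comparison.
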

\begin{proof}
Let $b > 0$ be an integer. By Corollary \ref{ross2},
  $\pi(x)>\frac{x}{\log x}$ for $ x>17.$ It therefore suffices to show that there exists an integer $N(b)$ such that
 $$\frac{ p_{n+1}^2}{2{\log}(p_{n+1})}>\frac{b p_{n+1}^2}{n+1}$$ for all $n \geq N(b)$ or, equivalently, such that $\log p_{n+1}<\frac{(n+1)}{2b}.$
  By Corollary \ref{ross4} (ii),
$$p_{n+1}<(n+1)({\log}(n+1)+{\log}{\log}(n+1))$$ for $n>6.$
 Thus, it is enough to find $N(b)$ such that
$${\rm log}((n+1)({\log}(n+1)+{\log}{\log}(n+1)))<\frac{(n+1)}{2b}.$$ This is always possible to achieve since
   if we treat ${\log}((n+1)({\log}(n+1)+{\rm log}{\rm log}(n+1)))$ as a function of $n,$ we get its derivative to be less than $\frac{3}{(n + 1)},$ which is smaller than $ \frac{1}{2b}$ for $n$ large enough. Thus for values of $n$ large enough  ${\log}((n+1)({\log}(n+1)+{\log}{\log}(n+1)))$  is less than $\frac{(n+1)}{2b}.$
 \end{proof}
 For example, $ \frac {2 p^2_{n+1}}{n+1} <\pi\left(p^2_{n+1} \right)$ for all $ n \geq 12,$ $ \frac {3 p^2_{n+1}}{n+1} <\pi\left(p^2_{n+1} \right)$ for all $ n \geq 23,$ $ \frac {4 p^2_{n+1}}{n+1} <\pi\left(p^2_{n+1} \right)$ for all $ n \geq 35$ and so on.

The following result gives an equivalent criterion for the validity of the inequality;
\begin{equation}\label{gt12}\frac {p^2_{n+1}}{n+1} <\pi\left(p^2_{n+1} \right) \; \; \mbox{for all $n \geq 2.$}\end{equation}
\begin{lemma}\label{l03}
Let $n \geq 2$ be a fixed integer, let $ p_n $ denote the $n^{\rm th}$ prime number and for each $s, \; 1 \leq s \leq n,$ let $m^s_r$ denote the $r^{\rm th}$ multiple in the ordered sequence of all products,
  $p_s \prod_{i\geq 1}{p_{s_i}},$ where $p_{s_i}$ are primes not less than $p_s.$ For each $s,$ let $\overline{ \{{m^s_r}\}}$ denote the terms of the sequence $\{{m^s_r}\}_{r \geq 1}$  which are less than
  $p^2_{n+1}.$
  Then  $ \frac{p^2_{n+1}}{n+1} < \pi\left(p^2_{n+1} \right) $ if and only if $|\bigcup_{s=1}^n\overline{ \{{m^s_r}\}}| <   \frac{n( p^2_{n+1})}{n+1} - 2 .$
\end{lemma}
\begin{proof} Note that $ \frac{1}{n+1} = 1 - \frac{n}{n+1} = 1 - \sum_{s=1}^n \frac{1 }{s(s+1)}.$ Therefore $ \frac{p^2_{n+1}}{n+1} < \pi\left(p^2_{n+1} \right) $ if and only if \\
$p^2_{n+1} - \sum_{s=1}^n \frac{p^2_{n+1} }{s(s+1)} < p^2_{n+1} - 2 - |\bigcup_{s=1}^n\overline{ \{{m^s_r}\}}|,$ that is, if and only if  $|\bigcup_{s=1}^n\overline{ \{{m^s_r}\}}| <   \frac{n( p^2_{n+1})}{n+1} -2 ,$
 since \\
 $\sum_{s=1}^n\frac{p^2_{n+1} }{s(s+1)} = \frac{n( p^2_{n+1})}{n+1} .$
\end{proof}

  For each integer $s\geq 1,$ let $p_s$ denote the $s^{\rm th}$ prime number. Let $n, k $ with $n \geq k$ be a pair of integers. For each multiple of $6$ greater than or equal to  $p^2_{n+1}-1,$ that is, $x =6r \geq p^2_{n+1}-1,$ let $S(x,k)$ denote the sum
  \begin{equation}\label{f1} S(x,k):= x + \sum_{j=1}^{k} (-1)^j \left\{ \sum_{1\leq
 s_1 < \cdots < s_{j}  \leq k }  \left[ \frac{x}{\prod_{i=1}^{j} p_{s_i}}
   \right] \right\}.
   \end{equation}
   The sum in Equation (\ref{f1}) is based on the inclusion-exclusion principle and can be considered as a sieve on the sequence of integers;
   \begin{equation}\label{f1s} 1,2,3,4,5, \ldots , x \end{equation}
   which sifts out all integers $y$ for which g.c.d.$(y, p_s) \neq 1$ for some $s, \; 1 \leq s \leq k.$
     Since the expression for the value $S(x,k)$ sifts out the primes $p_j, \; 1 \leq j \leq k,$ and the
      composites $m^s_r,$ defined in Lemma \ref{l03} from the Sequence (\ref{f1s}), the result of the lemma therefore enables us to compare the
      values $S(p^2_{n+1}-1,k)$ with $\frac{p^2_{n+1}-1}{n+1},$ the order of the residue of the sieve $p^2_{n+1}-1 - \sum_{s=1}^n \frac{p^2_{n+1}-1 }{s(s+1)}.$
     Further the comparison can be achieved inductively. In fact the result of the lemma could be extended to any sequence of the form (\ref{f1s}) and enable us to compare $S(x,k)$ with  $\frac{x}{n+1}.$
    Let ${\cal S}(x,k),$ denote the set of all positive integers not exceeding $x$ which are relatively prime to the primes $p_j, \; 1 \leq j \leq k.$ Then $ S(x,k) = |{\cal S}(x,k)|.$ Note that for $x = p^2_{n+1}-1,$ the effect of $S(x,n)$ on the Sequence \ref{f1s} coincides with that of the sieve of Eratosthenes apart from the fact that $S(x,n)$ also sifts out the primes $2,3,5, \ldots, p_n.$ Thus  $\pi\left(p^2_{n+1}\right) =  S(p^2_{n+1}-1,n)+n-1.$

    In the following result we show, in particular, that the number of primes between $p_{n}$ and $p^2_{n+1}$ is unbounded as $n$ increases. The result is an immediate consequence of Theorem \ref{maint32}.

   \begin{corollary}\label{th3} For $n \geq 12,$ let $p_n$ denote the $n^{\rm th}$ prime. Then for each integer $d \geq 1$  there exists an integer $N(d)$ such that
   $\frac {d p^2_{n+1}}{n+1} < S( p^2_{n+1}-1,n),$ for all $ n \geq N(d).$
   \end{corollary}
\begin{proof}
   If $n \geq 12,$ then
   $ \frac {b p^2_{n+1}}{n+1} <\pi\left(p^2_{n+1} \right)$ for some integer $b \geq 2.$
   Since $S( p^2_{n+1}-1,n) = \pi\left(p^2_{n+1} \right ) - n+1$ and
   $ n-1 < \frac {(n+1)^2 \log^2 (n+1)}{n+1} < \frac { p^2_{n+1}}{n+1},$
 we have
 $$  \frac {(b-1) p^2_{n+1}}{n+1} < \frac {b p^2_{n+1}}{n+1} -n+1 < \pi\left(p^2_{n+1} \right) - n+1  = S( p^2_{n+1}-1,n)$$ for all  $n \geq 12.$
  The result of the corollary follows if we put $d=b-1.$
 \end{proof}

 Since $\frac {d (p^2_{n+1}-1)}{n+1} < \frac {d p^2_{n+1}}{n+1}$ we see that if for each integer $n \geq 12,$ we put $d_n = \frac {(n+1)S(p^2_{n+1}-1,n)}{p^2_{n+1}-1}, $ then, from the result of Corollary \ref{th3}, we have an unbounded sequence of rational numbers $\{d_n\}_{n \geq 12}$ such that
$$S(p^2_{n+1}-1,n) = \frac {d_n (p^2_{n+1}-1)}{n+1}.$$ But for each $ n \geq 4,$ $S(p^2_{n+1}-1,n)$ may be computed inductively from $S( p^2_{n+1}-1,3),$ forming a finite sequence of values $S( p^2_{n+1}-1,k),$ $3 \leq k \leq n.$ For each $n \geq 4 $ and $k, \; 3 \leq k \leq n,$ we have \\
 $S(p^2_{n+1}-1,k+1) = S(p^2_{n+1}-1,k) - T( p^2_{n+1}-1,k+1),$
     where
     \begin{equation}\label{fse16}T( p^2_{n+1}-1,k+1) :=  \left[ \frac{ p^2_{n+1}-1}{p_{k+1}} \right] + \sum_{j=1}^{k} (-1)^{j} \left\{ \sum_{1\leq
 s_1 < \cdots < s_{j}  \leq k }  \left[ \frac{ p^2_{n+1}-1}{{p_{k+1}}\prod_{i=1}^{j} p_{s_i}}
   \right] \right\} . \end{equation}
In the same vain, $d_n$ is the last term of a sequence of numbers $\{a_k(n)\}, \; 3 \leq k \leq n,$ defined, for each fixed integer $n \geq k,$ by $a_k(n) :=  \frac {(k+1)S(p^2_{n+1}-1,k)}{p^2_{n+1}-1}.$

We now show that $a_3(n)>1$ for all $n \geq 3.$
For each $n \geq 3,$ put $k_n:=  \frac { p^2_{n+1}-1}{6}.$ Then ${\cal S}( p^2_{n+1}-1,2),$ the residue set
of the sieve that yields the value $S( p^2_{n+1}-1,2),$ consists of terms in the sequence:
\begin{equation}\label{fse9}1, 5,7,11,13, \ldots ,6t-1, 6t+1, \ldots , 6k_n-1  ,\end{equation}
while the residue set of the sieve $(p^2_{n+1}-1) - \sum_{s=1}^2 \frac{p^2_{n+1}-1 }{s(s+1)} = \frac {p^2_{n+1}-1}{3}$ partitions $p^2_{n+1}-1$ into $\frac {p^2_{n+1}-1}{3}$ equal parts.
We show that $\frac { p^2_{n+1}-1}{4} < S( p^2_{n+1}-1,3)$ for all $ n \geq 3.$
But  $$ \frac{p^2_{n+1}-1}{4} = \frac { p^2_{n+1}-1}{3} - \frac { p^2_{n+1}-1}{3\cdot4}$$ while
$$S( p^2_{n+1}-1,3)  = \frac { p^2_{n+1}-1}{3} - \left[ \frac {p^2_{n+1}-1}{5} \right ]+ \left[ \frac { p^2_{n+1}-1}{2 \cdot 5} \right ] + \left[ \frac { p^2_{n+1}-1}{3 \cdot 5} \right ]- \left[ \frac { p^2_{n+1}-1}{2\cdot3 \cdot 5} \right ] .$$ $S( p^2_{n+1}-1,3)$ may, equivalently, be obtained by forming all the products
\begin{equation}\label{fse11}5\cdot1, \; 5\cdot5,5\cdot7,\; 5\cdot11, \; 5\cdot13, \; \ldots , \; 5\cdot(6t-1), \; 5\cdot(6t+1), \; \ldots , \; 5\cdot(6k_n-1)  \end{equation} and subtracting the total number of terms
 in this sequence consisting of all products of magnitude less than $p^2_{n+1}-1$ from $S( p^2_{n+1}-1,2).$ Since $4<5,$ and the average difference between consecutive elements in the Sequence (\ref{fse9}) is $3$ it follows that we
 must have $\frac { p^2_{n+1}-1}{4} < S( p^2_{n+1}-1,3)$ for all $ n \geq 3.$ Thus $a_3(n)>1$ for all $n \geq 3.$

  Note that for each pair of integers $n,$ $k, \; 3 \leq k \leq n,$ we have:
  \begin{equation}\label{fse12}
  \frac {a_k(n)(p^2_{n+1}-1)}{k+1} = S(p^2_{n+1}-1,k).\end{equation}
  If $k=3,$ then $1<a_3(n)<1.2$ for each value of $n \geq 3.$  We know that
    \begin{equation}\label{fse14} \frac {d (p^2_{n+1}-1)}{n+1} <  S( p^2_{n+1}-1,n) \end{equation}
     for some integer $d \geq 2$ if $ n \geq 23.$ As observed above, each value $S( p^2_{n+1}-1,n)$ may be obtained inductively from $S( p^2_{n+1}-1,3)$  and likewise each value $\frac { p^2_{n+1}-1}{n+1}$ may simultaneously be obtained inductively from $\frac{ p^2_{n+1}-1}{3}.$  Corollary \ref{th3} shows that for each fixed integer $n \geq 12,$ we have $a_3(n) \leq a_n(n)=d_n.$

In the example below, we compute $a_{30}(n)$ in some cases when and $n \geq 30.$

\begin{example}\label{ex1} {\rm Let $k =30.$  The table below shows the values
 $a_{30}(n)$ for some  integers $n \geq 30.$ We consider values $n$ for which $p_{31}^2-1 =16128 \leq p^2_{n+1}-1 < \prod_{s=1}^{30}p_s.$ For the last value, $\prod_{s=1}^{30}p_s$ in the table,  we use the formula:
$ \frac{(31)\prod_{s=1}^{30}(p_s-1)}{\prod_{s=1}^{30}p_s}$ to compute the corresponding quotient.
\vspace{4ex}
 \begin{table}[ht]
    \centering
    %\subfloat[Decay Channels]{
     %\rule{4cm}{3cm}
     % \newcommand{\minitab}[2][l]{\begin{tabular}{#1}#2\end{tabular}}
     %\renewcommand{\multirowsetup}{\centering}
      \begin{tabular}{|c|c|c|} \hline
       $n$ &   $ p^2_{n+1}-1$  & $a_{30}(n)$  \\ \hline
       %\multirow{4}{*}{Jet} &${\bf p}_T >$ 25 GeV   \\
      $30$ &    $16 \; 128$      &  $3.5521 $     \\ \hline
      $1026 $  &  $66\; 896 \; 040$     &  $3.5566$    \\ \hline
      $2052 $   &  $320 \; 803\; 920$     &  $3.5573$      \\ \hline
      $3078  $   &  $799 \; 701 \; 840$     &  $3.5578$       \\ \hline
      $ 4103 $    &  $1\;  518 \; 738 \; 840  $     &   $3.5579    $    \\ \hline
      $5130  $     &  $2\; 499 \; 100 \; 080  $    &  $3.5580  $    \\ \hline
       $ 6156  $   &  $3\; 738 \; 221 \; 880  $    &  $ 3.5580 $    \\ \hline
        $7182  $    &   $ 5\; 277 \; 586 \; 608$    &  $3.5580   $    \\ \hline
        $  7695 $    &  $6\; 150\; 794 \; 328  $    &  $3.5580 $    \\ \hline
         $ 8469  $    &  $7\; 607 \; 851 \; 728   $    &  $3.5579 $    \\ \hline
         $ 9593   $   & $ 10 \; 003 \; 800 \; 360 $   &  $ 3.5579   $    \\ \hline
         $     $  &  $\prod_{s=1}^{30}p_s $    &  $ 3.5579 $    \\ \hline
              \end{tabular}
 \end{table}
   }
\end{example}

 The following is our main observation in this section:

\begin{lemma}\label{th30} Let $n,$ $k$ with, $ k \leq n,$ be a pair of integers. Then
$2T(p^2_{n+1}-1,k+1) < \frac {a_{k}(n)( p^2_{n+1}-1)}{(k+1)(k+2)}$ for all $k \geq 150.$
\end{lemma}
\begin{proof} We first
 %show that $a_{30}(n) > 3$ for all $n \geq 30.$ It suffices to show that for every integer $t,$ \\ $ 2688 \leq t \leq \prod_{s=3}^{30}p_s ,$ we have;
% $$a(t) =\frac {(31)S(6t,30)}{6t} > 3.$$
% This suffices since an integer $m, \; 1 < m \leq \prod_{s=1}^{30}p_s, $ is divisible by a prime $p_j, \; 1 \leq j \leq 30,$ if and only if
% $m + r(\prod_{s=1}^{30}p_s)$ is divisible by $p_j, \; 1 \leq j \leq 30,$ for all integers $r \geq 1.$ Note that $ a(t) = a_{30}(n)$ if $6t =  p^2_{n+1}-1$ for some $n \geq 30.$
% In the table above we see that $a(t)>3$ for some values of $t.$ The same can be established
% computationally for the remaining values of $t.$ For very large values of $t$ we note that $30$ is a small number so ${\cal S}(6t,30)$ may be assumed to be relatively evenly
% distributed and this renders $a(t)>3$ for all values of $t.$
%
%  We now show that $a_{k}(n)>3$ for all $k, \; 30 < k \leq n.$ We first note, from the result of Corollary \ref{th3}, that $a_{30}(n) < a_{n}(n)$ for all $n>30.$ It remains to
   show that
 $\{a_{k}(n)\}$ is a nondecreasing sequence for each $n.$
 To get a more explicit estimate for $a_{k}(n)$ for a given value of $k \geq 30,$ we note that if $k=n,$ then
  \begin{eqnarray}\label{fv9}
 a_{n}(n)&= &\frac {(n+1)S(p^2_{n+1}-1,n)}{p^2_{n+1}-1} \nonumber \\[2ex]
 \hspace{4ex} & = & \frac {(n+1)(\pi\left(p^2_{n+1} \right ) - (n-1) )}{p^2_{n+1}-1} \nonumber \\[2ex]
 \hspace{4ex} & > & \frac {(n+1)(\frac{ p_{n+1}^2 }{(2{\log}(p_{n+1}))} - (n-1) )}{p^2_{n+1}-1} \nonumber \\[2ex]
 \hspace{4ex} & > & \frac {(n+1)}{(2{\log}(p_{n+1}))} - \frac{(n+1) (n-1)}{(n+1)^2 ({\log}(n+1))^2} \; \; \;  > \; \;  \frac {(n+1)}{(2{\log}(p_{n+1}))} - \frac{1}{10}. \nonumber
 \end{eqnarray}
 By Theorem \ref{ross1}, $\frac {m}{({\log}(\frac{m}{1.64}))} < \frac {m}{({\log}(m-\frac{1}{2}))} < \pi (m)$ for $m \geq 67.$ We see therefore that we must have \\
 $a_{n}(n) > \frac {(n+1)}{(2{\log}(p_{n+1}))}$ for all $n \geq 30.$

 Recall that ${\cal S}(p^2_{n+1}-1,k)$ represents the residue set of the sieve of Equation \ref{f1}, that is, is the set of all positive integers not exceeding $p^2_{n+1}-1$ which are relatively prime to
 the primes $p_s, \; 1 \leq s \leq k.$ Let $ \{q^{k}_r\}, \; r \geq 1$ be the sequence of elements of ${\cal S}(p^2_{n+1}-1 ,k),$ so that $q^{k}_1 = 1, q^{k}_2= p_{k+1}, q^{k}_3 = p_{k+2}, \ldots .$ Then
  $q^{k}_r$ is a prime whenever $q^{k}_r < p^2_{k+1}.$ For $n > k,$  ${\cal S}(p^2_{n+1}-1,k+1)$  is obtained from ${\cal S}(p^2_{n+1}-1, k)$ by sifting out all products $q^{k}_r p_{k+1}$ less than $p^2_{n+1}-1,$ where, for each $r,$ $q^{k}_r$ is an element of ${\cal S}(p^2_{n+1}-1, k)$ or, equivalently,
   $$S(p^2_{n+1}-1,k+1) = S(p^2_{n+1}-1,k) - T( p^2_{n+1}-1,k+1).$$
  Now
  $$\frac {a_k(n)(p^2_{n+1}-1)}{k+1} = S(p^2_{n+1}-1,k)$$ and
  \begin{eqnarray}\label{fce9}
 \frac {a_{k}(n)(p^2_{n+1}-1)}{k+2}& = &\frac {a_{k}(n)(p^2_{n+1}-1)}{k+1} - \frac{a_{k}(n)(p^2_{n+1}-1) }{(k+1)(k+2)} \nonumber \\[2ex]
\hspace{4ex} & =  &\frac {p^2_{n+1}-1}{\frac{1}{a_{k}(n)}(k+1)} - \frac{p^2_{n+1}-1 }{\frac{1}{a_{k}(n)} (k+1)(k+2)} \nonumber
 \end{eqnarray}
 Thus $a_{k+1}(n) = a_k(n)$ if $$T( p^2_{n+1}-1,k+1) = \frac{p^2_{n+1}-1 }{\frac{1}{a_{k}(n)} (k+1)(k+2)}.$$ It follows that $a_{k+1}(n) \geq a_k(n)$ only if
 $$T( p^2_{n+1}-1,k+1) \leq \frac{p^2_{n+1}-1 }{\frac{1}{a_{k}(n)} (k+1)(k+2)}.$$
  From our remarks above, it suffices to show that
 \begin{equation}\label{fs2}
 r(\frac{1}{a_k(n)}(k+1)(k+2))< p_{k+1}q^{k}_r
  \end{equation}
 for each $r \geq 1$ for which both products are less than $p^2_{n+1}-1.$ Since $(k+1)(\log (k+1) ) < p_{k+1},$  it suffices to show that $r(\frac{1}{a_k(n)}(k+2)) < (\log (k+1) )q^{k}_r$ or, equivalently,
 $\frac{r}{\log (k+1)}(\frac{1}{a_k(n)}(k+2)) < q^{k}_r.$ If $1<q^{k}_r <  p^2_{k+1},$ then
 $ q^{k}_r$ is equal to a prime number $p_{s}$ with $s >k.$ We know that $s\log s < p_s.$ Treating $s\log s$ as a function of $s$ we get its derivative to be $1+\log s.$ Treating
  $\frac{r}{\log (k+1)}(\frac{1}{a_k(n)}(k+2))$ as a function of $r$ we get its derivative to be equal to $\frac{1}{\log (k+1)}(\frac{1}{a_k(n)}(k+2)).$ By virtue of our estimation of $a_n(n)$ above we assume
   that $a_k(n) \geq \frac {(k+1)}{(2{\log}(p_{k+1}))}$ or that $ \frac {(k+1)}{(2{\log}(p_{k+1}))}$ is a close estimate for $a_k(n).$
   Then we would have that
  $\frac{1}{\log (k+1)}(\frac{1}{a_k(n)}(k+2))$ is less than or approximately equal to  $\frac {(k+2)(2{\log}(p_{k+1}))}{(k+1)(\log (k+1) )}.$
  Now
   \begin{eqnarray}\label{fce9}
 \frac {(k+2)(2{\log}(p_{k+1}))}{(k+1)(\log (k+1))}& = &\frac {2{\log}(p_{k+1})}{\log (k+1) } + \frac {2{\log}(p_{k+1})}{(k+1)\log (k+1) } \nonumber \\[2ex]
\hspace{4ex} & <  &\frac {2{\log}((k+1)(\log (k+1)+ \log \log (k+1) ))}{\log (k+1) } + 0.1 \nonumber \\[2ex]
\hspace{4ex} & =  & 2 + \frac {2{\log}(\log (k+1)+ \log \log (k+1) )}{\log (k+1) } + 0.1  \; \; \; \; < 3 \nonumber
 \end{eqnarray}
 But
  $3 < 1+\log s, \; \; s \geq k+1$ and $k \geq 30.$ This establishes the Inequality (\ref{fs2}) for
   $1\leq q^{k}_r <  p^2_{k+1}.$ For $k \geq 30,$ the set ${\cal S}(p^2_{n+1}-1,k)$ is more dense over the interval $1\leq q^{k}_r <  p^2_{k+1},$ than over the interval
   $q^{k}_r \geq  p^2_{k+1}.$ The above argument therefore suffices for the cases $q^{k}_r \geq  p^2_{k+1}.$
   For $s \geq 150,$ $6 < 1+\log s$ and
     this completes the proof of the lemma.
  \end{proof}

A famous conjecture of Hardy and Littlewood states that
\begin{equation}\label{rgt10}\pi_2\left(n \right)\approx 2C_2 \int_2^n \frac{ dt}{\log^2 t} = L_2(n) \end{equation}
where $C_2 = 0.661618155....$
 $L_2(n)$ may be estimated by \begin{equation}\label{rgt9}
  2C_2\frac{n}{\log^2 n} . \end{equation}
Later we shall show that the Hardy-Littlewood conjecture implies Conjecture \ref{maint4}.

 We note that the $n^{\rm th}$ Catalan number,  ${\frac{1}{n+1}} {\binom{2n}{n}},$ may be expressed in the form
   \begin{eqnarray}\label{cat1}
{\frac{1}{n+1}} {\binom{2n}{n}}& =   &  \left (1 - {\frac{n}{n+1}} \right) {\binom{2n}{n}}  \nonumber \\[2ex]
\hspace{4ex} & =  & {\binom{2n}{n}} -  {\binom{2n}{n+1}}    \nonumber \\[2ex]
  \hspace{4ex} & =  &  {\binom{2n}{n}} -\sum_{i=1}^{n}{2n-i \choose n}.
 \end{eqnarray}
   We can therefore consider ${\frac{1}{n+1}} {\binom{2n}{n}}$ as representing the residue of a sieve defined on the sequence of consecutive integers from $1$ to  ${\binom{2n}{n}}$ in the following way. Sift out
    ${\binom{2n-1}{n}}$  numbers from the sequence in an evenly distributed fashion. Since ${\binom{2n-1}{n}} = {\frac{1}{2}} {\binom{2n}{n}},$ this is equivalent to striking out all the even numbers.  From the remaining integers, ${2n-2 \choose n}$ numbers in the sequence are sifted out in an evenly distributed fashion and so on for each of the integers $i, \; 3 \leq i \leq n.$  In any case
   the ${\frac{1}{n+1}} {\binom{2n}{n}}$ elements of the residue set are assumed to be evenly distributed over the interval $1$ to ${2n \choose n}.$ Thus any two consecutive elements in the
    residue of the sieve may be assumed to be separated by an interval of $ n+1.$ So $n+1$ is the average density of the residue of the sieve of Equation (\ref{cat1}).

  Let ${C^{2n}_n}= {\binom{2n}{n}}.$   Writing
    \begin{eqnarray}\label{sc1}
 \frac {p^2_{n+1}}{n+1} & = &    \frac {p^2_{n+1}}{(n+1)}\left(\frac {1}{C^{2n}_n} \binom{2n}{n} \right)  \nonumber \\[2ex]
\hspace{4ex} & =  &   \frac {p^2_{n+1}}{C^{2n}_n} \left({\binom{2n}{n}} -\sum_{i=1}^{n}{2n-i \choose n} \right), \nonumber
 \end{eqnarray}
we see that we can treat  $\left[\frac{p^2_{n+1}}{n+1} \right]$ as a residue of the sieve of Equation (\ref{cat1}) when restricted to the sequence of integers $1, \ldots ,{p^2_{n+1}}.$

 \section{Proof of Theorem \ref{maint33}}\label{proof2}

For $n \geq 3,$ let $x = 6r \geq p^2_{n+1}-1$ be an integer. For $k \leq n,$ the equation
  \begin{eqnarray}\label{sx1} x - \sum_{s=1}^k \frac{x }{s(s+1)} = \frac {x}{k+1}  \end{eqnarray}
  can be viewed as a sieve on the magnitude $x$ with residue $\frac {x}{k+1}.$
  When $k=2,$ then the residue set of the Sieve (\ref{sx1}) on the sequence $1,2,3, \;  \ldots, \; x$  has order
  $\frac {x}{3}.$  On the other hand $S(x,2) = \frac {x}{3}$ is the order of the residual set after sifting out all the multiples of $2$ and $3$ not exceeding $x.$ Let $W$ and $S$ denote the respective residue sets of order   $\frac {x}{3}.$ Upon multiplying the Sieve (\ref{sx1}) by $\frac{1}{2}$ we obtain a sieve which can be considered as sifting out the elements of $W$ in pairs. We know that the sieve of Equation (\ref{f1}) may be extended naturally to one on pairs of the form  $(6t-1, 6t+1)$ in $S.$
In this section we shall compare the order of the magnitude of the residue sets of the respective extensions of the two sieves on $W$ and $S.$
   In particular if $k=n,$ the residue of the extension of the sieve of Equation (\ref{f1}), when $x = p^2_{n+1}-1,$ will consist of twin primes.
  In this event we shall see that as $k$ increases the extension of the Sieve (\ref{sx1}) on $W$ is dominant or more porous than that of the Sieve (\ref{f1}) on $S.$ The residue of the extension of the Sieve (\ref{sx1}) will then be seen to be unbounded as $k$ increases and this, in turn, will establish  Theorem \ref{maint33}.

We first prove the following preliminary result. After sifting out the multiples of $2$ and $3$ from the sequence $1,2,3, \ldots , (p^2_{n+1}-1)$ the residue set is of the form  $\{1, (p^2_{n+1}-2)\} \cup \{(6t-1, 6t+1) \; |\; 1 \leq t < \frac{(p^2_{n+1}-1) }{6} \}.$ The sieve of Eratosthenes extends naturally to the set $\{(6t-1, 6t+1) \; |\; 1 \leq t < \frac{p^2_{n+1}-1 }{6} \}$ and for each integer $n \geq 3,$ we let
 ${\cal S}(p^2_{n+1})$ denote the set
 $${\cal S}(p^2_{n+1}) = \{t \in \mathbb{N} \; | \;  1 \leq t < \frac{(p^2_{n+1}-1) }{6} , \;\; 6t-1 \; \mbox{or} \; 6t+1 \; \mbox{is a composite}\}.$$
 Note that $6t-1$ or $6t+1$ is a composite if and only if either is equal to $m_r^s$ for some integers $r,s.$ Twin primes are of the form
 $(6t-1, 6t+1)$ for some integer $t \geq 1$ apart from the pair $(3, 5).$ Thus the order of the complement $|\left({\cal S}(p^2_{n+1})\right)'| =  \pi_2\left(p^2_{n+1} \right) - 1.$
 We have the following:
\begin{lemma}\label{maint31}  For $n \geq 3,$ let $ p_n $ denote the $n^{\rm th}$ prime.  Then
 $$ \frac{p^2_{n+1}}{2(n+1)}  < \pi_2\left(p^2_{n+1} \right) $$ if and only if
$$  |{\cal S}(p^2_{n+1})|  < \frac{(n-2)p^2_{n+1}}{6(n+1)} + \frac{1}{6} .$$
\end{lemma}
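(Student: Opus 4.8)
\medskip

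\noindent\textbf{Proof proposal.} The plan is to reduce the inequality $\frac{p^2_{n+1}}{2(n+1)}<\pi_2(p^2_{n+1})$ to an exact identity expressing $\pi_2(p^2_{n+1})$ in terms of $|{\cal S}(p^2_{n+1})|$, in the same spirit as Lemma~\ref{l03}, which reduced $\frac{p^2_{n+1}}{n+1}<\pi(p^2_{n+1})$ to an identity for $|\bigcup_s\overline{\{m^s_r\}}|$. The only new features are that the objects being counted are now the \emph{pairs} $(6t-1,6t+1)$ rather than single integers, and that the algebraic decomposition $\frac1{n+1}=1-\sum_{s=1}^{n}\frac1{s(s+1)}$ used there is replaced here by the partial-fraction identity
$$\frac1{2(n+1)}=\frac16-\frac{n-2}{6(n+1)}\,,\qquad\text{so that}\qquad \frac{p^2_{n+1}}{2(n+1)}=\frac{p^2_{n+1}}{6}-\frac{(n-2)p^2_{n+1}}{6(n+1)}\,.$$

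The second step is to count $\pi_2(p^2_{n+1})$ exactly in terms of $|{\cal S}(p^2_{n+1})|$. For $n\ge 3$ the prime $p_{n+1}\ge 7$ is coprime to $6$, so $p^2_{n+1}\equiv 1\pmod 6$ and $M:=\frac{p^2_{n+1}-1}{6}$ is a positive integer; as recalled just before the lemma, sieving the multiples of $2$ and $3$ out of $1,2,\dots,p^2_{n+1}-1$ leaves the two loose integers $1$ and $p^2_{n+1}-2$ together with exactly the $M-1$ pairs $(6t-1,6t+1)$, $1\le t\le M-1$. By definition $t\in{\cal S}(p^2_{n+1})$ iff $(6t-1,6t+1)$ is not a pair of twin primes, and since $(3,5)$ is the one twin pair not of this form, the complement of ${\cal S}(p^2_{n+1})$ inside $\{1,\dots,M-1\}$ has order $\pi_2(p^2_{n+1})-1$. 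Hence
$$\pi_2\!\left(p^2_{n+1}\right)=1+\bigl((M-1)-|{\cal S}(p^2_{n+1})|\bigr)=\frac{p^2_{n+1}-1}{6}-\bigl|{\cal S}(p^2_{n+1})\bigr|\,.$$

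The final step is to substitute this identity and the formula for $\frac{p^2_{n+1}}{2(n+1)}$ into $\frac{p^2_{n+1}}{2(n+1)}<\pi_2(p^2_{n+1})$, cancel the common term $\frac{p^2_{n+1}}{6}$ from both sides, and isolate $|{\cal S}(p^2_{n+1})|$; this brings the inequality into the shape $|{\cal S}(p^2_{n+1})|<\frac{(n-2)p^2_{n+1}}{6(n+1)}+c$, where the additive constant $c$ is read off from the loose integers $1$ and $p^2_{n+1}-2$ and the exceptional pair $(3,5)$. I expect the only genuinely delicate point to be precisely this boundary accounting---verifying that $M$ is an integer, that the residue contains $M-1$ pairs rather than $M$, and that $(3,5)$ is the unique twin pair outside the family $(6t-1,6t+1)$---after which the conclusion follows from the partial-fraction identity and a single substitution.
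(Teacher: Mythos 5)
Your proposal follows essentially the same route as the paper's own proof: both rest on the exact count $\pi_2\left(p^2_{n+1}\right)=\frac{p^2_{n+1}-1}{6}-|{\cal S}(p^2_{n+1})|$ (from the observation that the complement of ${\cal S}(p^2_{n+1})$ has order $\pi_2(p^2_{n+1})-1$) together with the decomposition $\frac{1}{2(n+1)}=\frac16-\frac{n-2}{6(n+1)}$, which the paper writes as the telescoping sum $\frac16-\sum_{s=3}^{n}\frac{1}{2s(s+1)}$, followed by a single substitution. One caution on the boundary accounting you deferred: carrying out that substitution actually yields the constant $-\frac16$ rather than the stated $+\frac16$ (the paper's own last line makes the same sign slip after writing $\frac{p^2_{n+1}}{6}=\frac{p^2_{n+1}-1}{6}+\frac16$), and since $|{\cal S}(p^2_{n+1})|$ is an integer while $\frac{(n-2)p^2_{n+1}}{6(n+1)}$ need not be, the two versions of the inequality are not formally interchangeable.
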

\begin{proof}
 Let $n \geq 3$ be an integer. We recall from the result of Lemma \ref{l03} that if for each $s, \; 1 \leq s \leq n,$ we let $m^s_r$ denote the $r^{\rm th}$ multiple in the ordered sequence of all products,
  $p_s \prod_{i\geq 1}{p_{s_i}},$ where $p_{s_i}$ are primes not less than $p_s$ and for each $s,$ let $\overline{ \{{m^s_r}\}}$ denote the terms of the sequence $\{{m^s_r}\}_{r \geq 1}$  which are less than
  $p^2_{n+1},$
  then  $ \frac{p^2_{n+1}}{n+1}  < \pi\left(p^2_{n+1} \right) $ if and only if $|\bigcup_{s=1}^n\overline{ \{{m^s_r}\}}| <  \sum_{s=1}^n \frac{p^2_{n+1} }{s(s+1)}-2.$
 Therefore for all values of $n \geq 1,$
 $$|\overline{ \{{m^1_r}\}}|+|\overline{ \{{m^2_r}\}}| = \left[\frac{p^2_{n+1} }{2}\right] + \left[\frac{p^2_{n+1} }{6}\right] -2 =  \sum_{s=1}^2\left[\frac{p^2_{n+1} }{s(s+1)}\right]  -2.$$
 $\overline{ \{{m^1_r}\}}$ and  $\overline{ \{{m^2_r}\}}$ are obtained by sifting out the multiples of $2$ and $3$ in the sequence $1, \ldots , (p^2_{n+1}-1).$ Thus the residue set consists of
 $\{1, (p^2_{n+1}-2)\} \cup \{(6t-1, 6t+1) \; |\; 1 \leq t < \frac{(p^2_{n+1}-1) }{6} \}.$ On the other hand  $\sum_{s=1}^2\left[\frac{p^2_{n+1} }{s(s+1)}\right]$ sifts out integers in the sequence $1, \ldots , (p^2_{n+1}-1)$ to leave any two consecutive elements in the
    residue of the sieve separated by an interval of $ 3.$ Thus we can assume that the residue consists of
 $ \{1, (p^2_{n+1}-3)\} \cup \{(6t-2, 6t+1) \; |\; 1 \leq t < \frac{(p^2_{n+1}-1) }{6}  \}.$
 Now extending the sieve of Eratosthenes to the set $$ T =  \{(6t-1, 6t+1) \; |\; 1 \leq t < \frac{(p^2_{n+1}-1) }{6}  \}$$ leaves a residue set of order $\pi_2\left(p^2_{n+1} \right) -1.$
  % Consider subtracting the value   $\sum_{s=3}^n \frac{p^2_{n+1} }{2s(s+1)}$ on the set of pairs  \\ $\{(6t-2, 6t+1) \; |\; 1 \leq t < \frac{p^2_{n+1}-1 }{6} \}$ or the sequence of integers  $t, \; 1 \leq t < %\frac{p^2_{n+1}-1 }{6}.$
  On the other hand we have
 $\frac{1}{6} - \sum_{s=3}^n \frac{1}{2s(s+1)} = \frac{1}{6} - \frac{(n-2)}{6(n+1)} = \frac{1}{2(n+1)}.$  Thus we see that
  $ \frac{p^2_{n+1}}{2(n+1)}  < \pi_2\left(p^2_{n+1} \right) $ if and only if
  $\frac{p^2_{n+1}}{6} - \sum_{s=3}^n \frac{p^2_{n+1} }{2s(s+1)} < \frac{(p^2_{n+1}-1) }{6} - |{\cal S}(p^2_{n+1})|,$ that is,
 if and only if
 $$  |{\cal S}(p^2_{n+1})|  <   \frac{(n-2)p^2_{n+1}}{6(n+1)} + \frac{1}{6} $$ since
 $\sum_{s=3}^n\frac{p^2_{n+1} }{2s(s+1)} =  \frac{(n-2)p^2_{n+1}}{6(n+1)}$ and $\frac{p^2_{n+1}}{6}=\frac{(p^2_{n+1}-1) }{6} + \frac{1}{6}.$
 \end{proof}

  For each integer $s\geq 1,$ let $p_s$ denote the $s^{\rm th}$ prime number. For $k \geq 3$ let $x_k:= 5\cdot 7\cdot 11\cdot \ldots \cdot p_{k}.$ For each integer $r \geq 1,$ consider the function
\begin{eqnarray}\label{tw2}
 \begin{array}{ll}
 \phi^k_2(r \cdot x_k)&=r \cdot x_k(1-\frac{2}{p_{3}})(1-\frac{2}{p_{4}})\dots (1-\frac{2}{p_{k}})\\[2ex]
        &=r \cdot x_k + \sum_{j=3}^{k} (-1)^j \left\{ \sum_{3\leq
 s_1 < \cdots < s_{j}  \leq k}  \left( \frac{2^{j-2}r \cdot x_k}{\prod_{i=1}^{j} p_{s_i}}
   \right) \right\}.
 \end{array}
 \end{eqnarray}
 Then $\phi^k_2$ enumerates the pairs $(6t-1, 6t+1), \; 1 \leq t \leq r \cdot x_k,$ both of which are relatively prime to $x_k.$ The pairs $(6t-1,6t+1)$ in
the residue of the sieve $\phi^k_2$ which are both less than $p^2_{k+1}$ are twin primes. Our aim is to show that the number of twin primes bounded by $p_n$ and
$p^2_{n+1}$  in unbounded as $n$ increases.

 For each pair of integers $ n, \; k$ with, $3 \leq k \leq n,$ let $6k_n =   p^2_{n+1}-1$ and
  ${\cal R}(p^2_{n+1}-1,k)$ denote the set
 \begin{equation}\label{f8} {\cal R}(p^2_{n+1}-1,k):= \{1, 6k_n-1\} \cup \{t \in \mathbb{N} \; | \;  1 \leq t < k_n , \;\; \mbox{both} \; 6t-1 \; \mbox{and} \; 6t+1 \; \mbox{are not divible by} \; \; p_s,\;\; 3 \leq s \leq k\}. \nonumber
   \end{equation}
Going back to our definition of $ S(x,k)$ we note that
        after sifting out the multiples of $2$ and $3,$ the sieve
    $$ S(p^2_{n+1}-1,k)= p^2_{n+1}-1 + \sum_{j=1}^{k} (-1)^j \left\{ \sum_{1\leq
 s_1 < \cdots < s_{j}  \leq k }  \left[ \frac{p^2_{n+1}-1}{\prod_{t=1}^{j} p_{s_t}}
   \right] \right\}$$
   on the sequence of integers
   \begin{equation}\label{ft12} 1,2,3,4,5, \ldots ,    p^2_{n+1}-1 \end{equation}
   extends naturally to a sieve on the sequence of integers
    \begin{equation}\label{ff9}1, 5,7,11,13, \ldots ,6t-1, 6t+1, \ldots , 6k_n-1  .\end{equation}
    On the sequence (\ref{ff9}) we have
    \begin{equation}\label{fs9} S(p^2_{n+1}-1,k)= \frac{p^2_{n+i}-1}{3} - \left\{ \sum_{r=3}^{k}\left(\left[\frac{p^2_{n+1}-1}{p_r}\right] + \sum_{j=1}^{r-1} (-1)^j \left\{ \sum_{1\leq
 s_1 < \cdots < s_{j}  \leq r-1 }  \left[ \frac{p^2_{n+1}-1}{\prod_{i=3}^{j}p_r p_{s_i}}
   \right] \right\} \right )\right \}.
   \end{equation}

    Arranging the terms of the Sequence (\ref{ff9}) as pairs in the set
        \begin{equation}\label{fr9} S:= \{1, 6k_n-1 \} \cup \{(6t-1, 6t+1) \; |\; 1 \leq t < k_n \},\end{equation}
         then Equation (\ref{fs9}) extends naturally to a sieve on the set $S,$ which sifts out pairs $(6t-1, 6t+1)$ whenever $6t+1$ or $6t-1$ is divisible by some prime $p_s, \; 3 \leq s \leq k.$ This practical extension of the sieve (\ref{fs9}) is easily seen to coincide with the effect of the sieve (\ref{tw2}) when truncated at  $p^2_{n+1}-1$ or over the interval  $1 \leq t < k_n <rx_k.$ Thus in both cases
           the resulting residue
          set is $ {\cal R}(p^2_{n+1}-1,k),$ unless $6k_n-1$ is divisible by $p_s, \; \; 3 \leq s \leq k,$ in which case this value is excluded from $ {\cal R}(p^2_{n+1}-1,k).$

   In the same vein if $n>k$ and $6k_n = p^2_{n+1}-1,$ then the effect of
    \begin{equation}\label{fp9} T( p^2_{n+1}-1,k) =  \left[ \frac{ p^2_{n+1}-1}{p_{k+1}} \right] +  \left( \sum_{j=1}^{k} (-1)^{j} \left\{ \sum_{1 \leq
 s_1 < \cdots < s_{j}  \leq k }  \left[ \frac{ p^2_{n+1}-1}{{p_{k+1}}\prod_{i=1}^{j} p_{s_i}}
   \right] \right\}  \right)  \end{equation}
   on the respective Sequence (\ref{ff9}) extends to a sieve on ${\cal R}( p^2_{n+1}-1,k)$
   that sifts out pairs $(6t-1, 6t+1)$ from ${\cal R}( p^2_{n+1}-1,k)$ for which $6t-1$ or $6t+1$ is divisible by $p_{k+1}$ with
   ${\cal R}( p^2_{n+1}-1,k+1)$ as the resulting residue set. Thus the Sieve (\ref{fs9}) extends naturally to a sieve on ${\cal R}( p^2_{n+1}-1,k).$ Let $Q( p^2_{n+1}-1,k+1)$ denote the number of pairs sifted out from ${\cal R}( p^2_{n+1}-1,k)$ in this way. Then $Q( p^2_{n+1}-1,k+1) \leq T( p^2_{n+1}-1,k+1).$

 On the other hand, for   $6k_n = p^2_{n+1}-1,$
 the sieve $ \frac{p^2_{n+1}-1}{3} - \sum_{s=3}^k\frac{p^2_{n+1}-1 }{s(s+1)}$ can be considered as a restriction of the sieve $ p^2_{n+1}-1 + \sum_{s=1}^k\frac{p^2_{n+1}-1}{s(s+1)}$ on the sequence
 $1,2,3,4,5, \ldots ,     p^2_{n+1}-1$
to the sequence of integers
 \begin{equation}\label{ff10}1, 4,7,10,13, \ldots ,   6t-2, 6t+1, \ldots ,6k_n-2. \end{equation}
 Therefore
 \begin{equation}\label{fk10}\frac{p^2_{n+1}-1}{3}- \sum_{s=3}^k\frac{p^2_{n+1}-1 }{s(s+1)} = \frac {p^2_{n+1}-1}{(k+1)}.\end{equation}
  Multiplying Equation (\ref{fk10}) by $\frac{1}{2}$ we obtain
\begin{equation}\label{fk11}\frac{p^2_{n+1}-1}{6} - \sum_{s=3}^k \frac{p^2_{n+1}-1}{2s(s+1)} = \frac{p^2_{n+1}-1}{2(k+1)}\end{equation}
 so that $\frac{p^2_{n+i}-1}{2(k+1)}$ is the order of the residue of the Sequence (\ref{ff10}) when the residue terms are counted in pairs.
  Arranging the terms of the Sequence (\ref{ff10}) as pairs in the set
 \begin{equation}\label{fx11} W:= \{1,6k_n-2\} \cup \{(6t-2, 6t+1) \; |\; 1 \leq t < k_n \}\end{equation}
  we can consider the residue set of the Sequence (\ref{ff10}) as consisting of
   $\frac{p^2_{n+1}-1}{2(k+1)}$ pairs.
 Our aim is to compare $\frac{p^2_{n+1}-1}{2(k+1)}$ with $|{\cal R}(p^2_{n+1}-1,k)|$ as $k$ increases.

 For each integer pair of integers $ n, \; k,$ with, $3 \leq k \leq n,$ let \begin{equation}\label{fx12} a_2(k,n) : =  \frac{2(k+1)|{\cal R}(p^2_{n+1}-1,k)|}{p^2_{n+1}-1}.\end{equation}
 In the example below, we compute  $a_2(k,n)$ for some cases when $k=150$ and $n \geq 150.$
 \begin{example}\label{ex2} {\rm In this case $k =150.$  The table below shows the values of
 $a_2(150,n )$ for some cases when  $n \geq 150,$ that is, $p_{151}^2-1 = 769 \; 128 \leq p^2_{n+1}-1 < \prod_{s=1}^{150}p_s.$
% For the last value of $x$ in the table,  we use the formula:
%$$ \frac{(302)\prod_{s=3}^{150}(p_s-2)}{\prod_{s=1}^{150}p_s}$$ to compute the respective quotient.

  \begin{table}[ht]
    \centering
    %\subfloat[Decay Channels]{
     %\rule{4cm}{3cm}
     % \newcommand{\minitab}[2][l]{\begin{tabular}{#1}#2\end{tabular}}
     %\renewcommand{\multirowsetup}{\centering}
      %\begin{tabular}{|c|c|c|} \hline
%         % $x$  & $a_2(n,x)$  \\ \hline
%       %\multirow{4}{*}{Jet} &${\bf p}_T >$ 25 GeV   \\
          \begin{tabular}{|c|c|c|} \hline
       $n$ &   $ p^2_{n+1}-1$  & $a_2(150,n)$  \\ \hline
       %\multirow{4}{*}{Jet} &${\bf p}_T >$ 25 GeV   \\
     $150$ &    $769 \; 128$      &  $2.5522 $     \\ \hline
      $1026 $  &  $66\; 896 \; 040$     &  $2.7079$    \\ \hline
      $2052 $   &  $320 \; 803\; 920$     &  $2.7545$      \\ \hline
      $3078  $   &  $799 \; 701 \; 840$     &  $2.7519$       \\ \hline
      $ 4103 $    &  $1\;  518 \; 738 \; 840  $     &   $2.7426    $    \\ \hline
      $5130  $     &  $2\; 499 \; 100 \; 080  $    &  $2.7348  $    \\ \hline
       $ 6156  $   &  $3\; 738 \; 221 \; 880  $    &  $ 2.7293 $    \\ \hline
        $7182  $    &   $ 5\; 277 \; 586 \; 608$    &  $2.7252   $    \\ \hline
        $  7695 $    &  $6\; 150\; 794 \; 328  $    &  $2.7235$    \\ \hline
         $ 8469  $    &  $7\; 607 \; 851 \; 728   $    &  $2.7215 $    \\ \hline
         $ 9593   $   & $ 10 \; 003 \; 800 \; 360 $   &  $ 2.7193   $    \\ \hline
         %$     $  &  $\prod_{s=1}^{150}p_s $    &  $  $    \\ \hline
              \end{tabular}
 \end{table}
 }
\end{example}
 %The above computations illustrate cases of the following result.
% \begin{lemma}\label{th31}
% $a_2(k,n) > 1$ for all integers $k, \; \; 30 \leq k \leq 300$ and all $n \geq k.$
% \end{lemma}
%  \begin{proof}
%For values of $k$ relatively small, $a_2(k,n)$ does not vary by much from $a_2(k,k)$ as $n$ increases. It is therefore sufficient to check that
% $ \frac{p^2_{n+1}-1}{2(n+1)} < |{\cal R}(p^2_{n+1}-1,n)|$ for all $n,$  $30 \leq n \leq 300.$
%\end{proof}
% The result of the lemma shows that $\frac{p^2_{n+1}-1}{2(k+1)} < |{\cal R}(p^2_{n+1}-1,30)|$ for all $k, \; \; 30 \leq k \leq 300$ and $n \geq k .$
 The following is our main observation:
 \begin{theorem}\label{th5}
  $ \frac{p^2_{n+1}-1}{2(n+1)} < |{\cal R}(p^2_{n+1}-1,n)|$ for all integers $n \geq 150.$
   \end{theorem}
 \begin{proof} Consider the sieve $ x - \sum_{s=1}^k \frac{x }{s(s+1)}$  and that of Equation (\ref{f1}).  When  $x = p^2_{n+1}-1,$ then the result of Lemma \ref{th30} shows that $2T(p^2_{n+1}-1,k+1) < \frac {a_{k}(n)( p^2_{n+1}-1)}{(k+1)(k+2)}$ for all integers $n, k$ with $150 \leq k \leq n.$ The result is obtained by comparing  $$S(p^2_{n+1}-1,k+1) = |{\cal S}(p^2_{n+1}-1,k)| = S(p^2_{n+1}-1,k) - T( p^2_{n+1}-1,k+1)$$
  and
  $\frac {a_{k}(n)(p^2_{n+1}-1)}{k+1} - \frac{a_{k}(n)(p^2_{n+1}-1) }{(k+1)(k+2)}$ when $k \geq 150.$ But
 $${\cal S}(p^2_{n+1}-1,k) = {\cal R}(p^2_{n+1}-1,k) \cup ({\cal R}(p^2_{n+1}-1,k))^c$$
 where $ ({\cal R}(p^2_{n+1}-1,k))^c$ is the complement of ${\cal R}(p^2_{n+1}-1,k)$ in ${\cal S}(p^2_{n+1}-1,k).$ By the result of Lemma \ref{th30} for each integer $r$ for which
$ r(\frac{2}{a_k(n)}(k+1)(k+2)) < p^2_{n+1}-1$ there corresponds a pair $(6t-1, 6t+1) \in {\cal R}(p^2_{n+1}-1,k+1)$ or $(q^{k+1}_{r_1}, q^{k+1}_{r_2}) \in ({\cal R}(p^2_{n+1}-1,k+1))^c$ with at least one component divisible by $p_{k+1}.$ Now for $150 \leq k \leq n$ and $x = p^2_{n+1}-1$ compare the sieve $ \frac{x}{6} - \sum_{s=1}^{150} \frac{x }{2s(s+1)}$ with the restriction of $\phi^k_2$ to $\frac{x}{6}.$ Then the above correspondence would still hold when $k \geq 150$ and in the latter case we would have that for each for each integer $r$ for which
$ r(\frac{2}{a_2(k,n)}(k+1)(k+2)) < p^2_{n+1}-1$ there corresponds a pair $(6t-1, 6t+1) \in {\cal R}(p^2_{n+1}-1,k+1)$  with at least one component divisible by $p_{k+1}.$
 This is equivalent to the statement $Q(p^2_{n+1}-1,k+1) \leq \frac {a_{2}(k,n)( p^2_{n+1}-1)}{2(k+1)(k+2)}$ for all $k \geq 150.$
 Thus if we could show that
\begin{equation}\label{fx13}\frac{p^2_{n+1}-1}{2\cdot151} < |{\cal R}(p^2_{n+1}-1,150)| \end{equation}
 for all $n \geq 150,$ then we would have our required result $ \frac{p^2_{n+1}-1}{2(n+1)} < |{\cal R}(p^2_{n+1}-1,n)|$ for all $n \geq 150.$ Example \ref{ex2} establishes the Inequality (\ref{fx13}) in a few cases. Since $k=150$ is relatively small the residue set may be assumed to be relatively evenly distributed and consequently the Inequality holds for all $n \geq 150.$
  \end{proof}

 As a consequence of Theorem \ref{th5} we have for $x = p^2_{n+1}-1,$ $$\left[\frac{p^2_{n+1}}{2(n+1)} \right] \leq \frac {x}{2(n+1)}  < |{\cal R}(x,n)|+ \pi_2\left(p_{n} \right) = \left \{ \begin{array}{ll}
 \pi_2\left(p^2_{n+1} \right) +1 \;& \; \mbox{if $p_n = 6t-1$ and $6t+1$ is a prime} \\[2ex]
    \pi_2\left(p^2_{n+1} \right) \;& \; \mbox{otherwise},
    \end{array} \right.
    $$
    for all $n \geq 150.$
The cases $20 \leq n \leq 149$ of Theorem \ref{maint33} may be checked independently.

In our proof of Theorem \ref{th5} we saw that for $k \geq 150$ $a_{2}(k,k)$ increases with $k.$ Thus several cases of Conjecture \ref{maint4} may be verified in a similar manner.
 For example, $  \left[\frac{p^2_{n+1}}{(n+1)} \right] < \pi_2\left(p^2_{n+1} \right)$ for all $ n \geq 100,$ $\left[\frac{3p^2_{n+1}}{2(n+1)} \right] < \pi_2\left(p^2_{n+1} \right) $ for all $ n \geq 200,$ $ \left[\frac{2p^2_{n+1}}{(n+1)} \right] < \pi_2\left(p^2_{n+1} \right) $ for all $ n \geq 300$ and so on.

 We now show that the Hardy-Littlewood conjecture (\ref{rgt10}) implies Conjecture \ref{maint4}.
  Accordingly to the Hardy-Littlewood conjecture
   $\pi_2\left(n \right)$ is approximately equal to $ 2C_2\frac{n}{\log^2 n}.$
  Thus to show that
 $$ a\frac{p^2_{n+1}}{2(n+1)}  < \pi_2\left(p^2_{n+1} \right) $$
it would suffice to prove that $$  2C_2\frac{ p_{n+1}^2}{(4{\log}^2(p_{n+1}))}>a\frac{p^2_{n+1}}{2(n+1)}$$ for all $n$ sufficiently large, which is equivalent to
 ${\log}^2 p_{n+1}< C_2\frac{(n+1)}{a}.$ By Corollary \ref{ross4} (ii)
$$p_{n+1}<(n+1)({\log}(n+1)+{\log}{\log}(n+1))$$ for $n>6.$  Thus, it would be enough to show that
 $${\log}^2((n+1)({\log}(n+1)+{\log}{\log}(n+1)))< C_2\frac{(n+1)}{a}.$$
 Regarding  ${\log}^2((n+1)({\log}(n+1)+{\log}{\log}(n+1)))$ as a function of $n,$ we get its derivative
  to be less than
  $$2{\log}((n+1)({\log}(n+1)+{\log}{\log}(n+1)))\left( \frac{3}{(n + 1)} \right).$$
 Noting that $n+1 >  {\log}(n+1)$ and $n+1 > {\log}{\log}(n+1)$ this in turn implies that the derivative is less than
   $$\left( \frac{6}{(n + 1)} \right)(\log 2 + 2\log (n+1))$$
  which is smaller than $\frac{C_2}{a}$ for all $n$ sufficiently large. Thus for values of $n$ large enough  $${\log}^2((n+1)({\log}(n+1)+{\log}{\log}(n+1)))$$
  is less than $C_2\frac{(n+1)}{a}.$ The conjecture, therefore, implies that $\frac{p^2_{n+1}}{2(n+1)}$ is a weak lower bound for $  2C_2\frac{ p_{n+1}^2}{(4{\log}^2(p_{n+1}))}.$
  \begin{acknow}
  We would like to thank the following people for their helpful comments and for identifying errors and oversights in earlier versions of this work; Dang Vo Phuc, Stephan Wagner, Berndt Gensel, Shalin Singh and Abebe Tufa. We are also thankful to P. Kaelo for his assistance with several computer programmes that provided the empirical results in this paper.
  \end{acknow}

 \end{document}